\documentclass[final,3p,times,authoryear]{elsarticle}



\usepackage{amssymb}
\usepackage{amsthm}
\usepackage{amsmath}
\usepackage{amssymb}
\usepackage{amsfonts}
\usepackage{multirow}
\usepackage{amsthm}
\usepackage{bbm}
\usepackage{xcolor,float}
\usepackage{caption}
\usepackage{subcaption}
\usepackage[hidelinks]{hyperref}

\usepackage{graphicx}
\usepackage{slashbox}
\usepackage{natbib}

\usepackage{todonotes}
\usepackage{comment}
\setcounter{page}{1}
\newtheorem{theorem}{Theorem}
\newtheorem{lemma}{Lemma}
\newtheorem{proposition}{Proposition}
\newtheorem{corollary}{Corollary}

\theoremstyle{definition}

\newtheorem{definition}{Definition}

\newcommand{\R}{\mathbb{R}}

\newcommand{\E}{\mathbb{E}}

\newcommand{\vc}[3]{\overset{#2}{\underset{#3}{#1}}}

\newcommand{\1}{\mathbbm{1}}
\newtheorem{assumption}{Assumption}




\begin{document}
\makeatletter
\def\ps@pprintTitle{%
  \let\@oddhead\@empty
  \let\@evenhead\@empty
  \let\@oddfoot\@empty
  \let\@evenfoot\@oddfoot
}
\makeatother

\begin{frontmatter}



\author[label1]{Tommaso Lando\corref{c1}} 
\fntext[label1]{Department of Economics, University of Bergamo, Italy}
\ead{tommaso.lando@unibg.it}

\author[label1]{Sirio Legramanti} 


\cortext[c1]{Corresponding author}

\title{Bootstrap-based tests for the total time on test and the excess wealth orders}




\begin{abstract}
Given a pair of non-negative random variables $X$ and $Y$, we introduce a class of nonparametric tests for the null hypothesis that $X$ dominates $Y$ in the total time on test order. Critical values are determined using bootstrap-based inference, and the tests are shown to be consistent. The same approach is used to construct tests for the excess wealth order. As a byproduct, we also obtain a class of goodness-of-fit tests for the NBUE (New Better than Used in Expectation) family of distributions.
\end{abstract}


\begin{keyword} {Expected shortfall \sep New better than used in expectation \sep Nonparametric test \sep Right spread \sep Stochastic order }  




\end{keyword}

\end{frontmatter}
\section{Introduction}\label{intro}
The \textit{total time on test} (TTT) and the \textit{excess wealth} orders are related and established concepts in the field of stochastic orders \citep{shaked}, with applications in survival analysis, reliability, and economics, among others. However, despite their wide applicability, there is a lack of statistical tests for them. The few exceptions \citep{belzunce2001,belzunce2005} consider the null hypothesis of equality in distribution versus the alternative of strict dominance. In this paper, we follow a complementary approach, which seems to be still unexplored for these orders, and consists in testing the null hypothesis of dominance versus the alternative of non-dominance.

We begin with some notations. Let $H$ be a cumulative distribution function (CDF) defined on the nonnegative half-line, with finite mean $\mu_H$ and quantile function $H^{-1}(p)=\inf\{u:H(u)\geq p\}$. In this paper, we consider the following transforms:
$$T_H(p)=\int_0^{H^{-1}(p)}(1-H(x))dx, \qquad W_H(p)=\int^\infty_{H^{-1}(p)}(1-H(x))dx=\mu_H-T_H(p).$$
$T_H$ is referred to as the TTT transform of $H$, and is a primary tool in reliability and survival analysis \citep{LifeDist}. In particular, when $H$ is the lifetime distribution of a population, $T_H(p)$ gives the expected lifetime of the items in the population {which are smaller than} the $p$-th quantile. In fact, if $Z\sim H$, the TTT transform may be expressed as $T_H(p)=\E(\min(Z,H^{-1}(p)))$. 
 {On the other hand}, $W_H$ is known as the excess wealth transform, the \textit{right spread} function, or \textit{expected shortfall} at level $p$. In particular, {when $H$ is an income distribution}, $W_H(p)$ can be interpreted as the expected additional wealth of the individuals whose wealth exceeds the $p$-th quantile. More formally, $W_H(p)=\E(\max(Z,H^{-1}(p)))$. This transform is commonly employed in insurance mathematics \citep{dhaene}, {but it also has applications in reliability \citep{belzunce2001}. Note that, differently from $T_H,$ the function $W_H$ can be defined even without the nonnegativity assumption. However, here we focus on the nonnegative case to simplify the exposition.}

Given a pair of non-negative random variables, $X$ and $Y$, with CDFs $F$ and $G$, respectively, TTT and excess wealth transforms may be used to define two complementary stochastic orders, recalled in the following definition.
\begin{definition}\label{order}\
\begin{enumerate}\item $X$ dominates $Y$ in the TTT order, abbreviated as $X\geq_{ttt}Y$, if $T_F(p)\geq T_G(p),\forall p\in[0,1]$. 
\item $X$ dominates $Y$ in the excess wealth order, abbreviated as $X\geq_{ew}Y$, if $W_F(p)\geq W_G(p),\forall p\in[0,1].$ 
\end{enumerate}
\end{definition}
The TTT order has been studied, for instance, by \cite{kochar2002}, \cite{li2007}, \cite{belzunce2014} and \cite{belzunce2015}. It compares random variables in terms of size and dispersion. The TTT order is weaker than the usual stochastic order, but it implies the well-known \textit{increasing concave order}, also known as \textit{second-order stochastic dominance} \citep{shaked}. {This means that $X\geq_{ttt}Y$ implies $\E u(X)\geq \E u(Y)$ for every increasing concave function $u$.} One of the main applications of the TTT order is comparing lifetime distributions. In particular, if $X$ and $Y$ represent the random lifetime in two populations, $X\geq_{ttt}Y$ means that, for every proportion $p\in[0,1]$, the expected lifetime of the items in $X$, up to its $p$-th quantile $F^{-1}(p)$, is greater than or equal to the expected lifetime in $Y$, up to the corresponding quantile $G^{-1}(p)$. 

On the other hand, the excess wealth order is {commonly} applied in insurance mathematics, where the main focus is comparing risks \citep{denuit,sordo2009,barmalzan}. {Moreover, this order, also referred to as the \textit{right spread order} \citep{ponce}, also has a natural application in survival analysis and in reliability theory, as it compares expected residual lifetime beyond quantile thresholds; see \citet{shaked1998}, \citet{belzunce2001}, \citet{kochar2007}. For example, consider two systems in which items, with random lifetime $X$ and $Y$, respectively, are tested until the $p100\%$ of them fail. One may be interested in comparing the expected lifetime of the remaining items, which boils down to comparing $W_F(p)$ and $W_G(p)$. In this case, the first system has a larger expected residual lifetime, for every $p$, if $X\geq_{ew}Y$. The properties of the excess wealth order are complementary to those of the TTT order: it is implied by the usual stochastic order and it implies the \textit{increasing convex order} \citep{shaked}. The excess wealth order is also closely related to the \textit{location independent risk order}, introduced by \cite{jewitt} and studied in the economics literature.}

{In spite of the wide literature about these orders, there are few stastical methods available to test whether $X\geq_{ttt}Y$ or $X\geq_{ew}Y$ hold. \cite{belzunce2001} and \cite{belzunce2005} proposed tests for the null hypothesis of equality in distribution versus the alternatives of strict dominance (excluding the case of equal distributions). These tests have the disadvantage that we ignore how they behave when $X\not\geq_{ttt}Y$. Hence, in this paper we follow a complementary approach,
namely, we test $\mathcal{H}^{ttt}_0:X\geq_{ttt}Y$ versus the alternative $\mathcal{H}^{ttt}_1:X\not\geq_{ttt}Y$. In this case,} it is difficult to determine the distribution of the test statistic under $\mathcal{H}_0^{ttt}$, as this contains infinitely many pairs of distributions. Using a similar approach as in \cite{bdlorenz} (see also \cite{bd}), we propose a family of conservative tests for the TTT order, in which critical values are approximated via bootstrap procedures. Given the relation between the TTT and the excess wealth transforms, the same approach may be used to test $\mathcal{H}^{ew}_0:X\geq_{ew}Y$ versus $\mathcal{H}^{ew}_1:X\not\geq_{ew}Y$. Consistency of these tests is established under relatively mild distributional assumptions, which allow for both dependent and independent sampling schemes. 

{Finally, note that the} TTT order can be used to define the \textit{new better than used in expectation} (NBUE) family of distributions, {which} also has applications in survival analysis and reliability. For example, it can be employed in age replacement policies \citep{barlow1975}, applied to shock models \citep{block}, or leveraged to derive bounds for the survival function \citep{cheng,brown}. {The NBUE} condition expresses the notion of positive ageing -- meaning that ageing has an adverse effect on lifetime -- in a weaker sense compared to other popular properties, such as the \textit{increasing hazard rate}, the \textit{increasing hazard rate on average} and the \textit{decreasing mean residual life} \citep{LifeDist}.  In particular, a lifetime random variable $X$ is said to be NBUE if $X/\mu_F\geq_{ttt} E$, where $E$ is a unit exponential random variable. By exploiting this relation, the approach considered in this paper can also be used to define a class of bootstrap-based tests for the null hypothesis $\mathcal{H}^{NBUE}_0:$ ``$X$ is NBUE" versus the alternative $\mathcal{H}^{NBUE}_1:$ ``$X$ is not NBUE". This class of tests may be used as a {complement} to other popular tests (see for instance \cite{hollander,anis2011,anis2014}), which consider the null hypothesis of exponentiality, namely, $\mathcal{H}_0^E:X/\mu_F=_dE$, versus the alternative that $F$ is strictly NBUE, namely, $X/\mu_F>_{ttt} E$. Such tests are not exhaustive, in other words, they are not supposed to work under $\mathcal{H}^{NBUE}_1$. {However, if for a given dataset $\mathcal{H}_0^E$ is rejected whereas $\mathcal{H}^{NBUE}_0$ is not, one may conclude that the underlying distribution is strictly NBUE. Recently, another class of tests for  $\mathcal{H}^{NBUE}_0$ versus $\mathcal{H}^{NBUE}_1$ has been proposed by \cite{landonbue}, with a different approach which does not rely on bootstrap procedures.}

The rest of this paper is organised as follows. In Section~\ref{sect 2}, we introduce a class of test statistics that measure departures from the null hypothesis $\mathcal{H}_0^{ttt}$. In Section~\ref{consistency}, we establish the consistency properties of our class of tests. In Section~\ref{sect nbue}, we obtain tests for the NBUE null hypothesis using the relation between the TTT order and the NBUE class, while in Section~\ref{sect ew} we extend our approach to the excess wealth order. Finite sample properties of our tests are investigated by simulation studies in Section~\ref{sect sim}. {Finally, Section~\ref{sect ex} illustrates the usefulness of the proposed tests through an application.} All proofs are reported in the Appendix.
\section{Construction of the tests}\label{sect 2}
Throughout this paper, let $C[0,1]$ denote the space of bounded continuous functions over $[0,1]$. Given some $v\in C[0,1]$ and $p\geq 1$, $||v||_p$ denotes the $L^p$ norm of $v$, that is, for $p$ finite, $||v||_p=(\int_0^1|v(t)|^pdt)^{1/p},$ while $||v||_\infty=\sup_{[0,1]} |v|$ is the uniform norm. {The symbol $\rightsquigarrow$ denotes weak convergence as in \cite{vw}, while $\to_p$ represents convergence in probability.}

By Definition~\ref{order}, $\mathcal{H}^{ttt}_0$ is false if and only if the difference $\delta(u)=T_G(p)-T_F(p)$ is positive at some point $p\in[0,1]$. Therefore, we can measure the positive part of $\delta$, namely, $\delta_+=\max(0,\delta)$ with the following family of functionals$$\Psi_r(\delta)=||\delta_+||_r, $$
for $r\geq1$, including $r=\infty.$ {In this paper, we focus just on the extreme cases $r=1$ and $r=\infty$. However, the properties of the tests hold for every choice of $r\geq 1$, owing to Proposition 1 in \cite{landonew}.}

Clearly, $\mathcal{H}_0^{ttt}$ is false if and only if $\Psi_r(\delta)>0$. Therefore, we reject the null hypothesis if an empirical version of $\Psi_r(\delta)$, namely $\Psi_r(\widehat{\delta}),$ is significantly large. We will now discuss how to estimate~$\delta$.

\subsection{Estimation}\label{sampling}
Let $\mathcal{X}=\{X_1,...,X_n\}$ be an i.i.d. random sample from $ F $, and $\mathcal{Y}=\{Y_1,...,Y_m\}$ an i.i.d. random sample from~$ G $. In this paper, we deal with two different sampling schemes: independent sampling and matched pairs. Under independent sampling, the samples $\mathcal{X}$ and $\mathcal{Y}$ are independent of each other and the sample sizes $n$ and $m$ may differ. Differently, in the matched-pairs scheme, $n=m$ and we have $n$ i.i.d. pairs $\{(X_1,Y_1),...,(X_n,Y_n)\}$ drawn from a bivariate distribution with marginal CDFs $F$ and $G$, respectively. For our asymptotic analysis, we also assume that, as $ n \to \infty $, also $m\to\infty$, and that $\lim_{n,m\rightarrow \infty} nm/(n+m)=\infty$, and $\lim_{n,m\rightarrow \infty} n/(n+m)=\lambda\in[0,1]$. 

The random samples $\mathcal{X}$ and $\mathcal{Y}$ yield the empirical CDFs $$F_n(x)=\frac1n\sum\nolimits_{i=1}^n\1(X_i\leq x), \qquad G_m(x)=\frac1m\sum\nolimits_{j=1}^m\1(Y_j\leq x).$$ 
We will denote the order statistics of rank $k$ from $\mathcal{X}$ and $\mathcal{Y}$ with $X_{(k)}$ and $Y_{(k)}$, and the sample means by $\overline{X}_n$ and $\overline{Y}_m$, respectively. The empirical {TTT transform of $F$}, denoted by as $T_{F_n}$, is defined as the piecewise linear function which interpolates the jump points of the step function $ \int_0^{F_n^{-1}(p)}(1-F_n(x))dx$. In particular, for $k=0,...,n$, we have that
\begin{equation*}T_{F_n}\left(\frac{k}n\right)=\int_0^{X_{(k)}}(1-F_n(t))dt
=\frac1{n}\sum_{i=1}^{k}\nolimits(n-i+1)(X_{(i)}-X_{(i-1)})=\frac{n-k}{n}X_{(k)}+\frac{1}{n}\sum_{i=1}^{k}\nolimits X_{(i)},\end{equation*}
where $X_{(0)}:=0$. The sample mean is obtained as $T_{F_n}(1)=1/n\sum_{i=1}^{n}X_{(i)}=\overline{X}_n$. Note that, when testing the lifetime of $n$ items, $nT_{F_n}\left(\frac{k}n\right)$ gives the total time on test up to the $k$-th observation. $T_{G_m}$ is defined similarly. Now, an empirical version of $\delta$ is just $$\widehat{\delta}=T_{G_m}-T_{F_n}.$$
It can be shown that, as $n$ and $m$ tend to infinity, $T_{F_n}$ and $T_{G_m}$ converge strongly and uniformly to $T_F$ and $T_G$, respectively, in $[0,1]$ \citep[Theorem 2.1]{barlowzwet}. This implies that $\widehat{\delta}$ also converges strongly and uniformly to $\delta$ in $[0,1]$, under our asymptotic regime.

\section{Consistency}\label{consistency}
\subsection{Asymptotic properties of the test statistic}
 Let us define the empirical TTT processes associated with $F$ and $G$ as $$t^F_n=\sqrt{n}(T_{F_n}-T_F), \qquad t^G_m=\sqrt{m}(T_{G_m}-T_G),$$
respectively. Note that such processes have trajectories in $C[0,1]$. Correspondingly, consider the process $$\Delta_{n,m}=\sqrt{r_{n,m}}\left(\widehat{\delta}-\delta\right)=\sqrt{\tfrac n{n+m}} \ t^G_m-\sqrt{\tfrac m{n+m}} \ t^F_n,$$
where $r_{n,m}=nm/(n+m)$. Recall that, under our assumptions, $r_{nm} \to \infty$ as $n$ and $m$ diverge. To establish the asymptotic properties of our class of tests, we determine the limit behaviour of $\Delta_{n,m}$ via the functional delta method. This requires the derivation of the Hadamard derivative of the map $H\rightarrow T_H$ with respect to the $L^1$ norm, which can be achieved under the following assumption.

\begin{assumption} \label{ass1}
	Both $ F $ and $ G $ are continuously differentiable with strictly positive densities $f$ and $g$, respectively, and have a finite moment of order $2+\epsilon$ for some $\epsilon>0$. Moreover, $F(0)=G(0)=0$.
\end{assumption}

Let $\mathcal{B}$ be a centered Gaussian element of $C[0,1]\times C[0,1]$, with covariance function
$$Cov(\mathcal{B}(x_1,y_1),\mathcal{B}(x_2,y_2))=C(x_1\land x_2,y_1\land y_2)-C(x_1,y_1)C(x_2,y_2).$$
Under the independent-sampling scheme, $C(x_1,y_1)=x_1y_1$ is the product copula, whereas, under the matched-pairs scheme, $C$ is the copula associated with the pair $(X_i,Y_i)$, $i=1,...,n$. Accordingly, $\mathcal{B}_1(x_1)=\mathcal{B}(x_1,1)$ and $\mathcal{B}_2(x_2)=\mathcal{B}(1,x_2)$ are independent Brownian bridge processes under independent-sampling, but they may be dependent under paired sampling.

\begin{theorem}\label{process}
	 Under Assumption~\ref{ass1}, for the sampling schemes discussed in Section~\ref{sampling} we have$$ \sqrt{r_{n,m}}(\widehat{\delta}-\delta)\rightsquigarrow\widetilde{\mathcal{T}}=\sqrt{\lambda}\mathcal{T}_G-\sqrt{1-\lambda}\mathcal{T}_F \quad\text{ in }C[0,1],$$
where
	$$\mathcal{T}_F(p)= \frac{(1-p)\mathcal{B}_1(p)}{f\circ F^{-1}(p)}+\int_0^p \mathcal{B}_1(y)dF^{-1}(y),\quad\mathcal{T}_G(p)= \frac{(1-p)\mathcal{B}_2(p)}{g\circ G^{-1}(p)}+\int_0^p \mathcal{B}_2(y)dG^{-1}(y).$$
	  
\end{theorem}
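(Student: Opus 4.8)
The plan is to represent $T_H$ as a smooth functional of the distribution and then apply the functional delta method, feeding in the known weak limits of the empirical (quantile) processes. The natural starting point is the integration-by-parts identity
$$T_H(p)=(1-p)H^{-1}(p)+\int_0^p H^{-1}(u)\,du=\int_0^p(1-u)\,dH^{-1}(u),$$
valid whenever $H(0)=0$ (guaranteed by Assumption~\ref{ass1}), which exhibits $T_H$ as a functional of the quantile function alone. Writing the map $\psi:H\mapsto T_H$ as the composition $\psi=\psi_2\circ\psi_1$, where $\psi_1:H\mapsto H^{-1}$ is the quantile transformation and $\psi_2:Q\mapsto\big(p\mapsto(1-p)Q(p)+\int_0^pQ(u)\,du\big)$ is linear, reduces the task to differentiating the two factors and combining them by the chain rule.

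First I would record the joint weak convergence of the input processes. Under independent sampling, $\sqrt{n}(F_n-F)$ and $\sqrt{m}(G_m-G)$ converge to asymptotically independent $F$- and $G$-Brownian bridges by Donsker's theorem; under matched pairs, the bivariate empirical process converges to the centered Gaussian element $\mathcal{B}$ with the copula covariance stated just before the theorem, whose marginals are the (possibly dependent) bridges $\mathcal{B}_1\circ F$ and $\mathcal{B}_2\circ G$. In either scheme the pair $(\sqrt{n}(F_n-F),\sqrt{m}(G_m-G))$ converges jointly, as required by the delta method applied to $(F,G)\mapsto(T_F,T_G)$.

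Next I would establish Hadamard differentiability of $\psi$. The quantile map $\psi_1$ is Hadamard differentiable under Assumption~\ref{ass1} (continuously differentiable CDF with strictly positive density), with derivative $h\mapsto-(h\circ H^{-1})/(f\circ H^{-1})$ — the standard quantile-process result — while $\psi_2$, being bounded and linear, is its own derivative. The chain rule then gives
$$\psi'_F(h)(p)=-\frac{(1-p)\,h(F^{-1}(p))}{f(F^{-1}(p))}-\int_0^p\frac{h(F^{-1}(u))}{f(F^{-1}(u))}\,du.$$
Substituting the weak limit of $\sqrt{n}(F_n-F)$, which reduces to a standard bridge evaluated along $F^{-1}$, and absorbing the overall sign into the symmetric bridge $\mathcal{B}_1$, this is precisely $\mathcal{T}_F$; the identical computation at $G$ yields $\mathcal{T}_G$. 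The delta method then delivers $t_n^F\rightsquigarrow\mathcal{T}_F$ and $t_m^G\rightsquigarrow\mathcal{T}_G$, jointly. Since $n/(n+m)\to\lambda$, the deterministic weights in $\Delta_{n,m}=\sqrt{n/(n+m)}\,t_m^G-\sqrt{m/(n+m)}\,t_n^F$ converge to $\sqrt{\lambda}$ and $\sqrt{1-\lambda}$, and continuous mapping together with Slutsky's lemma gives $\Delta_{n,m}\rightsquigarrow\sqrt{\lambda}\,\mathcal{T}_G-\sqrt{1-\lambda}\,\mathcal{T}_F=\widetilde{\mathcal{T}}$.

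The step I expect to demand the most care is the Hadamard differentiability at the endpoints, particularly near $p=1$, where both $F^{-1}$ and the integrand $h(F^{-1}(\cdot))/f(F^{-1}(\cdot))$ may be unbounded and the textbook quantile-map result is stated only on compact subintervals of $(0,1)$. What rescues the argument, and forces the convergence to hold in $C[0,1]$ under the uniform norm, is the weight $(1-u)$ in the representation $\int_0^p(1-u)\,dH^{-1}(u)$ combined with the finite moment of order $2+\epsilon$ in Assumption~\ref{ass1}: these tame the upper tail, guarantee that $\psi'_F(h)$ is continuous up to $p=1$, and provide the tightness needed for the limit to live in $C[0,1]$. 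Verifying this uniform tail control is, in my view, the crux of the proof.
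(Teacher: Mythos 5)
Your skeleton coincides with the paper's: the same identity $T_F(p)=\int_0^p(1-y)\,dF^{-1}(y)$, the same decomposition into the quantile map followed by a linear map, and the same delta-method/continuous-mapping conclusion with weights $\sqrt{\lambda}$, $\sqrt{1-\lambda}$; your derivative formula also matches the paper's (up to the sign absorbed by symmetry of the centered Gaussian limit). However, the step you yourself flag as ``the crux'' is genuinely missing, and it is exactly the step that cannot be carried out with the tools you invoke. The ``standard quantile-process result'' gives Hadamard differentiability of $H\mapsto H^{-1}$ only into $\ell^\infty[a,b]$ for compact $[a,b]\subset(0,1)$ (or for compactly supported $H$ with density bounded away from zero). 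Under Assumption~\ref{ass1} the support may be unbounded, so $F^{-1}$ and the integrand $h(F^{-1}(u))/f(F^{-1}(u))$ blow up near $p=1$; in a sup-norm framework your linear map $\psi_2$ is then not a bounded operator on any space containing the relevant quantile functions, and Donsker's theorem (sup-norm convergence of $\sqrt{n}(F_n-F)$) is not matched to any norm in which $\psi_1$ is differentiable on all of $[0,1]$. Asserting that the weight $(1-u)$ and the $(2+\epsilon)$-moment ``tame the tail'' does not close this: you need to say in which normed spaces the differentiation and the weak convergence take place, and they must be the \emph{same} spaces for the delta method to apply.

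The paper closes this gap by changing norms rather than by a weighted sup-norm estimate. It works with the space $\mathbb{L}$ of CDF-like functions endowed with $\|z\|_{\mathbb{L}}=\max\{\|z\|_\infty,\|1-z\|_1\}$ and with $\mathbb{B}=L^1(0,1)$, and imports two non-standard ingredients: (i) \cite{kaji}'s result that $\theta:F\mapsto F^{-1}$ is Hadamard differentiable at $F$ \emph{as a map from $\mathbb{L}$ into $\mathbb{B}$}, tangentially to continuous functions, with derivative $\theta_F'(z)=-(z\circ F^{-1})(F^{-1})'$; and (ii) Lemma~5.1 of \cite{sunbeare}, which upgrades Donsker convergence to joint weak convergence of $(\sqrt{n}(F_n-F),\sqrt{m}(G_m-G))$ in $\mathbb{L}\times\mathbb{L}$ --- this is precisely where the $(2+\epsilon)$-moment condition is consumed, since the $\|\cdot\|_{\mathbb{L}}$ norm controls the integrated tail. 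The linear map is then differentiated as $\alpha\mapsto\int_0^{\cdot}(1-y)\,d\alpha(y)$ from $\mathbb{B}$ into $C[0,1]$, and the chain rule applies because domains and ranges line up. Without reproducing this $L^1$-type machinery (or an equivalent, e.g.\ weighted empirical-process theory), your argument stops exactly where the real work begins, so as written the proof is incomplete.
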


We measure departures from the null hypothesis with the test statistic $\widehat{\Psi}_r=\sqrt{r_{n,m}}\Psi_r(\widehat{\delta}).$
In particular, ``large" values of $\widehat{\Psi}_r$ lead to rejection of $\mathcal{H}_0^{ttt}$. In fact, under $\mathcal{H}_0^{ttt}:\delta\leq 0,$ $\sqrt{r_{n,m}}\Psi_r(\widehat{\delta})\leq \sqrt{r_{n,m}}\Psi_r(\widehat{\delta}-\delta)$ {by properties of $\Psi_r$. Moreover, it can be shown that $\Psi_r$ satisfies some continuity properties \citep[Proposition~1]{landonew}, hence, the continuous mapping theorem implies that} $\sqrt{r_{n,m}}\Psi_r(\widehat{\delta}-\delta)\rightsquigarrow\Psi_r(\widetilde{\mathcal{T}}).$ In other words, the test statistic $\widehat{\Psi}_r$ is dominated under the null hypothesis by a statistic that has the same asymptotic behaviour as $\Psi_r(\widetilde{\mathcal{T}})$. The same approach was used by \cite{bdlorenz} in testing Lorenz dominance. 

Now, a critical value $c_\alpha$ is determined as the $(1-\alpha)$ quantile of $\Psi_r(\widetilde{\mathcal{T}})$, that is, $P(\Psi_r(\widetilde{\mathcal{T}})> c_\alpha)=\alpha$. For any $\alpha<1/2$, such a quantile is positive, finite, and unique because $\widetilde{\mathcal{T}}$ is a mean-zero Gaussian process. Since, under $\mathcal{H}_1^{ttt}:X\not\geq_{ttt}Y,$ the statistic $\widehat{\Psi}_r/\sqrt{r_{n,m}}=\Psi_r(\widehat{\delta})$ converges in probability to the strictly positive number $\Psi_r(\delta)$, then it is clear that $\widehat{\Psi}_r$ will diverge and exceed the threshold value $c_\alpha$. These properties are summarised in the following lemma. The proof is omitted as it can be obtained using similar arguments as in Lemma 4 of \cite{bdlorenz}.
\begin{lemma}\label{lemmatest}
	\begin{enumerate}\
		\item Under $\mathcal{H}^{ttt}_0$, $\sqrt{r_{n,m}}\Psi_r(\widehat{\delta})\leq \sqrt{r_{n,m}}\Psi_r(\widehat{\delta}-\delta)\rightsquigarrow\Psi_r(\widetilde{\mathcal{T}})$. Moreover, for any $\alpha<1/2$, the $(1-\alpha)$ quantile of the distribution of $\Psi_r(\widetilde{\mathcal{T}})$ is positive, finite, and unique.
		\item Under $\mathcal{H}_1^{ttt}$, $\sqrt{r_{n,m}}\widehat{\Psi}_r\rightarrow_p \infty$. 
	\end{enumerate}
\end{lemma}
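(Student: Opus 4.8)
The plan is to handle the two parts separately, using Proposition~\ref{properties} for the deterministic manipulations of $\Phi_r$ and Theorem~\ref{process} together with the continuous mapping theorem for the limit law. For part~1, I would first establish the pathwise inequality. Under $\mathcal{H}_0^{ttt}$ the deterministic function $\delta$ satisfies $\delta(p)\le 0$ for all $p$, so property~2 of Proposition~\ref{properties}, applied with $\delta_1=\delta$ and $\delta_2=\widehat{\delta}$, gives $\Phi_r(\widehat{\delta})\le\Phi_r(\widehat{\delta}-\delta)$, and multiplying through by the positive factor $\sqrt{r_{n,m}}$ preserves the inequality. To identify the limit law of the right-hand side, I would use the homogeneity in property~5 to write $\sqrt{r_{n,m}}\,\Phi_r(\widehat{\delta}-\delta)=\Phi_r\bigl(\sqrt{r_{n,m}}(\widehat{\delta}-\delta)\bigr)$, and invoke property~4, which shows that $\Phi_r:C[0,1]\to\mathbb{R}$ is Lipschitz, hence continuous, for the uniform norm. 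Theorem~\ref{process} gives $\sqrt{r_{n,m}}(\widehat{\delta}-\delta)\rightsquigarrow\widetilde{\mathcal{T}}$ in $C[0,1]$, so the continuous mapping theorem delivers $\sqrt{r_{n,m}}\,\Phi_r(\widehat{\delta}-\delta)\rightsquigarrow\Phi_r(\widetilde{\mathcal{T}})$, which completes the chain in the statement.

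It then remains to analyze the quantiles of $\Phi_r(\widetilde{\mathcal{T}})$. The process $\widetilde{\mathcal{T}}=\sqrt{\lambda}\,\mathcal{T}_G-\sqrt{1-\lambda}\,\mathcal{T}_F$ is a centered Gaussian element of $C[0,1]$, and under Assumption~\ref{ass1} (strictly positive densities) it is nondegenerate, so $P(\widetilde{\mathcal{T}}\equiv 0)=0$. Finiteness of the quantile is routine: a Gaussian process with continuous paths on the compact set $[0,1]$ has an almost surely finite supremum (integrable, by Fernique's theorem), whence $\Phi_r(\widetilde{\mathcal{T}})\le\|\widetilde{\mathcal{T}}\|_\infty<\infty$ almost surely, so for every $\alpha>0$ the $(1-\alpha)$ quantile is finite. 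For positivity I would exploit the symmetry of centered Gaussian laws: by property~3 and the definition $\Phi_r(v)=\|v_+\|_r$, the event $\{\Phi_r(\widetilde{\mathcal{T}})=0\}$ coincides with $A=\{\widetilde{\mathcal{T}}(p)\le 0\ \forall p\}$, and since $-\widetilde{\mathcal{T}}\stackrel{d}{=}\widetilde{\mathcal{T}}$ the event $B=\{\widetilde{\mathcal{T}}(p)\ge 0\ \forall p\}$ has the same probability as $A$, while $A\cap B=\{\widetilde{\mathcal{T}}\equiv 0\}$ is null. Hence $2\,P(A)=P(A\cup B)\le 1$, giving $P(\Phi_r(\widetilde{\mathcal{T}})=0)=P(A)\le 1/2$. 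Consequently, for any $\alpha<1/2$ we have $P(\Phi_r(\widetilde{\mathcal{T}})\le 0)\le 1/2<1-\alpha$, which forces the $(1-\alpha)$ quantile $c_\alpha$ to be strictly positive.

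The hard part will be uniqueness, that is, showing the distribution function of $\Phi_r(\widetilde{\mathcal{T}})$ is continuous and strictly increasing at $c_\alpha$, so that $P(\Phi_r(\widetilde{\mathcal{T}})>c_\alpha)=\alpha$ has a single solution. Here I would use that $\Phi_r$ is a convex (property~6) and Lipschitz (property~4) functional, and that the law of such a functional of a nondegenerate Gaussian measure is absolutely continuous on $(0,\infty)$ with everywhere positive density there, carrying at most an atom at the bottom of its support. Since $c_\alpha>0$ lies strictly above that atom, the distribution is continuous and strictly increasing at $c_\alpha$, yielding uniqueness. This is precisely the step carried out for $\Phi_1$ and $\Phi_\infty$ in Lemma~4 of \cite{bdlorenz}, and the argument extends to general $r$ once convexity and the Lipschitz bound of Proposition~\ref{properties} are in hand; this absolute-continuity input is the one genuinely nontrivial ingredient.

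Finally, for part~2, under $\mathcal{H}_1^{ttt}$ the limit $\delta$ satisfies $\delta(p)>0$ at some $p$, so property~3 gives $\Phi_r(\delta)>0$. Since $\widehat{\delta}=T_{G_m}-T_{F_n}$ converges strongly and uniformly to $\delta$ (as recalled after the definition of the empirical TTT transform, via \citet[Theorem~2.1]{barlowzwet}), the Lipschitz bound in property~4 yields $|\Phi_r(\widehat{\delta})-\Phi_r(\delta)|\le\|\widehat{\delta}-\delta\|_\infty\to 0$, so $\Phi_r(\widehat{\delta})\to_p\Phi_r(\delta)>0$. Since $\Phi_r(\widehat{\delta})$ converges in probability to the strictly positive constant $\Phi_r(\delta)$ while $\sqrt{r_{n,m}}\to\infty$, a standard Slutsky argument gives $\widehat{\Phi}_r=\sqrt{r_{n,m}}\,\Phi_r(\widehat{\delta})\to_p\infty$, and a fortiori $\sqrt{r_{n,m}}\,\widehat{\Phi}_r\to_p\infty$, as claimed.
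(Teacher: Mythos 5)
Your proposal is correct and follows essentially the same route as the paper: the paper omits the formal proof, but the argument it sketches in the text before the lemma (property~2 of Proposition~\ref{properties} for the inequality, properties~4--5 plus Theorem~\ref{process} and the continuous mapping theorem for the weak limit, mean-zero Gaussianity of $\widetilde{\mathcal{T}}$ for the quantile properties, and convergence of $\Phi_r(\widehat{\delta})$ to the positive constant $\Phi_r(\delta)$ under the alternative) is exactly what you fill in. The one genuinely nontrivial ingredient you flag --- absolute continuity and strict monotonicity of the law of the convex functional $\Phi_r(\widetilde{\mathcal{T}})$ away from a possible atom at zero --- is precisely the step the paper also delegates to Lemma~4 of \cite{bdlorenz}, so your treatment matches the intended proof.
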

Now, critical values may be approximated by simulating the distribution of $\Psi_r(\widetilde{\mathcal{T}})$ via bootstrap procedures.

\subsection{Bootstrap-based decision rule}\label{boot}
Let us denote the bootstrap estimators of the empirical CDFs $F_n$ and $G_m$, respectively, as $F^*_n$ and $G^*_m$. These may be written as
$$F^*_n(x)=\frac1n\sum\nolimits_{i=1}^n M_i^{(1)}\1(x\leq X_i), \qquad G^*_m(x)=\frac1m\sum\nolimits_{i=1}^m M_i^{(2)}\1(x\leq Y_i),$$
where $M^{(1)}=(M_1^{(1)},...,M^{(1)}_n)$ and $M^{(2)}=(M_1^{(2)},...,M^{(2)}_m)$ are independent of the data and are drawn from a multinomial distribution according to the chosen sampling scheme. In particular, under the independent-sampling scheme, $M^{(1)}$ and $M^{(2)}$ are independently drawn from multinomial distributions with uniform probabilities over $n$ and $m$ trials, respectively. Under the matched-pairs scheme, we have $M^{(1)}=M^{(2)}$, drawn from the multinomial distribution with uniform probabilities over $n=m$ trials, which means that we sample (with replacement) pairs of data, from the $n$ pairs $\{(X_1,Y_1),...,(X_n,Y_n)\}$. 
Correspondingly, the bootstrap estimators of $T_{F_n}$ and $T_{G_m}$ are given, respectively, by $T_{F^*_n}$ and $T_{G^*_m}$. Accordingly, we have 
$\widehat{\delta}^*=T_{G^*_n}-T_{F^*_n}.$
By applying the functional delta method for the bootstrap (see Section 3.9.3 of \citet{vw}), one may show that $\sqrt{r_{n,m}}\Psi_r( \widehat{\delta}^*-\widehat{\delta})$ has the same limiting distribution as~$\sqrt{r_{n,m}}\Psi_r(\widetilde{\mathcal{T}}) $. This allows the approximation of the $p$-values $p=P\{\sqrt{r_{n,m}}\Psi_r( \widehat{\delta}^*-\widehat{\delta})>\sqrt{r_{n,m}}\Psi_r( \widehat{\delta})\},$ using $K$ bootstrap replications:
$$p\approx \frac1K \sum\nolimits_{k=1}^K \1(\sqrt{r_{n,m}}\Psi_r( \widehat{\delta}_k^*-\widehat{\delta})>\sqrt{r_{n,m}}\Psi_r( \widehat{\delta})),$$
where $\widehat{\delta}_k^*$ is the $k$-th resampled realisation of $\widehat{\delta}^*$. We reject $\mathcal{H}_0^{ttt}$ whenever $p<\alpha$. 

As proved in the following proposition, the bootstrap approximation of the critical values is consistent. This allows us to exploit the results of Lemma~\ref{lemmatest} and establish the asymptotic behaviour of our class of tests.

\begin{proposition}
	\label{ptest}
	Under Assumption~\ref{ass1} and the sampling schemes in Section~2.1,
	\begin{enumerate}
		\item If $\mathcal{H}^{ttt}_0$ is true, $\lim_{n\rightarrow\infty}P\{\text{reject }\mathcal{H}^{ttt}_0\}\leq\alpha$; 
		\item If $\mathcal{H}^{ttt}_0$ is false, $\lim_{n\rightarrow\infty}P\{\text{reject }\mathcal{H}^{ttt}_0\}=1.$\end{enumerate}
\end{proposition}
{The results obtained in this section may be extended to the case of randomly right-censored data, which is common in life testing problems. In particular, the proposed approach still works if we replace the empirical CDF with the Kaplan-Meier estimator \citep{km}, tailored to deal with this kind of data. The weak convergence of the Kaplan-Meier process has been derived by \cite{gill1983}. More recently, \cite{dobler2019} proved that the bootstrap version of such a process converges weakly to the same limit. To establish the asymptotic properties of the tests in this case, we need to assume that the CDFs under analysis have compact supports, with strictly positive and continuous densities. This allows us to leverage Lemma 3.10.24 in \cite{vw}, obtain the Hadamard differentiability of the inverse map with respect to the uniform norm, and derive that of the TTT transform as in Theorem~\ref{process}. Finally, the properties of the tests follow again by the functional delta method.}

\section{Goodness-of-fit tests for the NBUE family}\label{sect nbue}
The classic way of expressing the NBUE condition is requiring that, for every $t>0$, $\E(X-t|X>t)\leq \E(X) .$ This means that an item of any age $t$ has a mean residual life smaller than or equal to the expected lifetime of a new item in the same population. Generally, ageing properties can also be defined in terms of a suitable stochastic order between the random variable of interest $X$ and the unit exponential $E$, which indeed expresses the concept of ``no ageing" thanks to its memoryless property. In this regard, as discussed in Section \ref{intro}, $X$ is NBUE if $X/\mu_F\geq_{ttt} E$. This stochastic inequality boils down to $S_F(p)\geq p, \forall p\in[0,1]$, where $S_F=T_F/\mu_F$ is the scaled TTT transform of $F$, while it is easy to see that the TTT transform of $E$ is just the identity function.
 
We now introduce a family of tests for $\mathcal{H}^{NBUE}_0:X/\mu_F\geq_{ttt} E$ versus $\mathcal{H}^{NBUE}_1:X/\mu_F\not\geq_{ttt} E$ based on a random sample of size $n$ from $F$. Goodness-of-fit tests for ageing properties of this type are especially relevant in life testing problems and in shape-constrained statistical inference. {There are many examples of tests of this type for the increasing hazard rate family, including the one of \cite{tenga1984}, also based on the TTT transform. Our goodness-of-fit approach to NBUE is complementary to that of the articles cited in Section~1, which consists in testing the null hypothesis of exponentiality versus strictly NBUE alternatives}. {However, in this setting, computing critical values requires either a least favorable distribution, as in \cite{landonbue}, or resorting to bootstrap, as in the present paper.}

Let now $S_{F_n}=T_{F_n}/\overline{X}$ be the empirical version of $S_F$. We can measure deviations from the null hypothesis by measuring the positive part of the function $\widehat{\delta}(p):=p-S_{F_n}(p)$, which is the empirical counterpart of $\delta(p)=p-S_F(p)$. Under $\mathcal{H}^{NBUE}_0$, $p-S_F(p)\leq0,\forall p\in[0,1]$. Therefore $\widehat{\delta}(p)\leq S_F(p)-S_{F_n}(p),\forall p\in[0,1]$. The following result, based on Theorem~\ref{process}, establishes the weak convergence of the process $s^F_n=\sqrt{n}(S_{F_n}-S_F)$.
\begin{corollary}\label{cor1}
Under Assumption~\ref{ass1}, for $n\to\infty$, $s_n^F\rightsquigarrow \mathcal{S}_F$ in $C[0,1]$,
	where $$\mathcal{S}_F(p)=\frac{\mathcal{T}_F(p)}{\mu_F}-\frac{T_F(p)\mathcal{T}_F(1)}{\mu_F^2}.$$
\end{corollary}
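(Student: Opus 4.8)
The plan is to derive Corollary~\ref{cor1} directly from Theorem~\ref{process} by viewing the scaled transform $S_F = T_F/\mu_F$ as the image of the pair $(T_F,\mu_F)$ under a smooth division map, and then applying the functional delta method together with the continuous mapping theorem. The key observation is that $\mu_F = T_F(1)$, so the scaling denominator is itself a continuous linear functional of the TTT transform $T_F$; this means both numerator and denominator of $S_{F_n}$ are governed by the single limiting process $\mathcal{T}_F$ supplied by Theorem~\ref{process}.

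First I would record that, specializing Theorem~\ref{process} to a single sample (equivalently taking $\lambda=1$ and ignoring the $G$-component), we have $t_n^F = \sqrt{n}(T_{F_n}-T_F) \rightsquigarrow \mathcal{T}_F$ in $C[0,1]$. In particular, evaluating at $p=1$ and using that pointwise evaluation $v \mapsto v(1)$ is a continuous linear map on $C[0,1]$, the joint convergence $\bigl(t_n^F,\, t_n^F(1)\bigr) \rightsquigarrow \bigl(\mathcal{T}_F,\, \mathcal{T}_F(1)\bigr)$ holds, where $t_n^F(1) = \sqrt{n}(\overline{X}_n - \mu_F)$. The next step is to define the map $\phi:C[0,1]\times(0,\infty)\to C[0,1]$ by $\phi(v,a) = v/a$ and compute its Hadamard derivative at $(T_F,\mu_F)$ in the direction $(h,b)$, which by the quotient rule is
$$\phi'_{(T_F,\mu_F)}(h,b) = \frac{h}{\mu_F} - \frac{T_F\, b}{\mu_F^2}.$$
This derivative is linear and continuous, so $\phi$ is Hadamard differentiable at $(T_F,\mu_F)$, provided $\mu_F>0$, which holds under Assumption~\ref{ass1} since $F$ has strictly positive density on the nonnegative half-line.

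I would then apply the functional delta method (e.g. Theorem~3.9.4 of \citet{vw}) to conclude that
$$\sqrt{n}\bigl(S_{F_n}-S_F\bigr) = \sqrt{n}\bigl(\phi(T_{F_n},\overline{X}_n)-\phi(T_F,\mu_F)\bigr) \rightsquigarrow \phi'_{(T_F,\mu_F)}(\mathcal{T}_F,\mathcal{T}_F(1)) = \frac{\mathcal{T}_F}{\mu_F} - \frac{T_F\,\mathcal{T}_F(1)}{\mu_F^2},$$
which is exactly $\mathcal{S}_F$ as stated. Here I use that $S_{F_n} = T_{F_n}/\overline{X}_n = \phi(T_{F_n},\overline{X}_n)$ and that $\overline{X}_n = T_{F_n}(1)$ lies in $(0,\infty)$ almost surely for the empirical measure of positive data.

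The main technical point to watch is verifying the hypotheses of the delta method cleanly: one needs the joint weak convergence of $(T_{F_n},\overline{X}_n)$ in the product space $C[0,1]\times\R$, which follows because $\overline{X}_n$ is obtained from $T_{F_n}$ by the continuous evaluation functional, so joint convergence is automatic from the single limit in Theorem~\ref{process} rather than requiring a separate argument. Beyond that, the only subtlety is confirming Hadamard (as opposed to merely Gâteaux) differentiability of the division map $\phi$ uniformly along converging sequences in the direction argument — but this is standard for the quotient map away from zeros of the denominator and presents no real obstacle given $\mu_F>0$. The rest is the routine identification of the derivative with the claimed expression for $\mathcal{S}_F$.
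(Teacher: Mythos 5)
Your proof is correct, and it arrives at exactly the same derivative formula as the paper, but it is organized differently. The paper does not revisit the conclusion of Theorem~\ref{process} at all: it works with the composite map $\eta\circ\phi\circ\theta:F\mapsto S_F$, where $\eta:C[0,1]\to C[0,1]$ is $\eta(T)=T(\cdot)/T(1)$, quotes the Hadamard derivative $\eta'_{T_F}(u)=u(\cdot)/T_F(1)-u(1)T_F(\cdot)/T_F(1)^2$ from the proof of Lemma 2.1 of \cite{sunbeare}, and then concludes by the chain rule plus a single application of the functional delta method to the empirical process $\sqrt{n}(F_n-F)$. You instead use Theorem~\ref{process} as a black box: you push $t_n^F$ through the continuous linear map $v\mapsto(v,v(1))$ to obtain joint convergence of $(T_{F_n},\overline{X}_n)$, and then run a second delta method for the quotient map $(v,a)\mapsto v/a$ on $C[0,1]\times(0,\infty)$. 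The two derivative computations coincide --- your $\phi'_{(T_F,\mu_F)}(h,h(1))$ is precisely the paper's $\eta'_{T_F}(h)$ --- so this is a refactoring of the normalization step rather than a new idea, but the refactoring has genuine trade-offs. Your route is more self-contained for a reader: it needs only the statement of Theorem~\ref{process}, not its internal machinery ($\theta$, $\phi$, the chain rule), and Hadamard (indeed Fr\'echet) differentiability of division away from a zero denominator is elementary, as you note. The paper's route, by establishing Hadamard differentiability of the full map $F\mapsto S_F$, delivers the object needed downstream: the bootstrap consistency in Proposition~\ref{NBUE test} invokes the functional delta method for the bootstrap, which requires differentiability of the map applied to the underlying empirical process; with your two-stage factorization one would have to redo both stages conditionally on the data (a bootstrap analogue of Theorem~\ref{process} plus a conditional continuous-mapping step), which is feasible but extra work. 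Finally, both arguments hinge on $\mu_F>0$, which you correctly justify from Assumption~\ref{ass1} via $F(0)=0$.
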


Since $\mathcal{S}_F$ is a centered Gaussian process, we can establish that, under $\mathcal{H}^{NBUE}_0$, $\sqrt{n}\Psi_r(\widehat{\delta})\leq \sqrt{n}\Psi_r(S_{F_n}-S_F)\rightsquigarrow\Psi_r(\mathcal{S}_F),$ where, for any $\alpha<1/2$, the $(1-\alpha)$ quantile of $\Psi_r(\mathcal{S}_F)$ is positive, finite, and unique. On the other hand, under $\mathcal{H}^{NBUE}_1$, the test statistic $\sqrt{n}\Psi_r(\widehat{\delta})$ diverges in probability. Now, we may approximate the distribution of $\sqrt{n}\Psi_r(S_{F_n}-S_F)$ using the same bootstrap procedure discussed earlier. In particular, let $S_{F^*_n}$ be the bootstrap version of $S_{F_n}$. The $p$-values may be approximated as
$p\approx 1/K \sum\nolimits_{k=1}^K \1(\sqrt{r_{n,m}}\Psi_r( (S_{F_n}-S_{F_n^*})_k)>\sqrt{r_{n,m}}\Psi_r( \widehat{\delta})),$
where $(S_{F_n}-S_{F_n^*})_k$ is the $k$-th realisation of the simulated process $S_{F_n}-S_{F_n^*}$. Accordingly, we may establish the following asymptotic properties.
\begin{proposition}
	\label{NBUE test}
	Under Assumption~\ref{ass1},
	\begin{enumerate}
		\item If $\mathcal{H}^{NBUE}_0$ is true, $\lim_{n\rightarrow\infty}P\{\text{reject }\mathcal{H}^{NBUE}_0\}\leq\alpha$; 
		\item If $\mathcal{H}^{NBUE}_0$ is false, $\lim_{n\rightarrow\infty}P\{\text{reject }\mathcal{H}^{NBUE}_0\}=1$.\end{enumerate}
\end{proposition}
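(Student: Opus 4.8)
The plan is to follow the same two-step template used for Proposition~\ref{ptest}, adapting it to the single-sample, scaled-TTT setting. The statistic of interest is $\sqrt n\,\Phi_r(\widehat\delta)$ with $\widehat\delta(p)=p-S_{F_n}(p)$, and rejection occurs when this exceeds the bootstrap critical value $\widehat c_\alpha$, i.e.\ the $(1-\alpha)$ empirical quantile of $\sqrt n\,\Phi_r(S_{F_n}-S_{F_n^*})$ across the $K\to\infty$ replications. The whole argument reduces to two facts: (i) under $\mathcal H_0^{NBUE}$ the statistic is dominated by a process converging weakly to $\Phi_r(\mathcal S_F)$, which is already furnished by Corollary~\ref{cor1} together with properties~4 and~6 of Proposition~\ref{properties} and the continuous mapping theorem; and (ii) the bootstrap quantile $\widehat c_\alpha$ is a consistent estimator of the true $(1-\alpha)$ quantile $c_\alpha$ of $\Phi_r(\mathcal S_F)$.

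For step (ii) I would first record that the map $H\mapsto S_H = T_H/\mu_H$ is Hadamard differentiable tangentially to $C[0,1]$: the Hadamard derivative of $H\mapsto T_H$ underlying Theorem~\ref{process}, combined with the quotient rule for the normalisation by $\mu_H=T_H(1)$, yields exactly the linear map whose image of the empirical process is $\mathcal S_F$, as displayed in Corollary~\ref{cor1}. Since the multinomial bootstrap weights satisfy the conditions of the functional delta method for the bootstrap (Section~3.9.3 of \citet{vw}), it follows that, conditionally on the data and in probability, $\sqrt n(S_{F_n^*}-S_{F_n})\rightsquigarrow\mathcal S_F$ in $C[0,1]$. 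Applying the Lipschitz and convexity properties (items~4 and~6 of Proposition~\ref{properties}) and the continuous mapping theorem for the bootstrap then gives that $\sqrt n\,\Phi_r(S_{F_n}-S_{F_n^*})$ converges conditionally to $\Phi_r(\mathcal S_F)$. Because $\mathcal S_F$ is a centred Gaussian element, the distribution function of $\Phi_r(\mathcal S_F)$ is continuous and, for $\alpha<1/2$, strictly increasing near its positive $(1-\alpha)$ quantile; hence the conditional quantiles converge, $\widehat c_\alpha\rightarrow_p c_\alpha$.

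For part~1, under $\mathcal H_0^{NBUE}$ we have $\delta(p)=p-S_F(p)\le 0$, so property~2 of Proposition~\ref{properties} gives $\sqrt n\,\Phi_r(\widehat\delta)\le\sqrt n\,\Phi_r(S_{F_n}-S_F)$, and the right-hand side converges weakly to $\Phi_r(\mathcal S_F)$ by Corollary~\ref{cor1}. Bounding the rejection event $\{\sqrt n\,\Phi_r(\widehat\delta)>\widehat c_\alpha\}$ by $\{\sqrt n\,\Phi_r(S_{F_n}-S_F)>\widehat c_\alpha\}$ and combining the weak convergence of the dominating statistic with $\widehat c_\alpha\rightarrow_p c_\alpha$ (a continuity point) yields $\limsup_n P\{\text{reject}\}\le P\{\Phi_r(\mathcal S_F)>c_\alpha\}=\alpha$. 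For part~2, under $\mathcal H_1^{NBUE}$ the strong uniform convergence $S_{F_n}\to S_F$ (inherited from that of $T_{F_n}$ and $\overline X$) together with properties~3 and~4 gives $\Phi_r(\widehat\delta)\rightarrow_p\Phi_r(\delta)>0$, so $\sqrt n\,\Phi_r(\widehat\delta)\rightarrow_p\infty$, while $\widehat c_\alpha$ stays bounded in probability by step~(ii); hence $P\{\text{reject}\}\to 1$.

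The main obstacle is step~(ii), the bootstrap consistency: one must verify that the scaled-TTT map is Hadamard differentiable at $F$ (the quotient introduces the term $-T_F(p)\mathcal T_F(1)/\mu_F^2$, which must be controlled uniformly in $p$ under the moment and density conditions of Assumption~\ref{ass1}) and that the multinomial resampling meets the hypotheses of the bootstrap delta method, so that conditional weak convergence to the \emph{same} Gaussian limit holds in probability. Once this is in place, the size and consistency conclusions follow by the routine Slutsky-and-quantile-convergence arguments sketched above, exactly as in the proof of Proposition~\ref{ptest}.
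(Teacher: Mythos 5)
Your proposal is correct and follows essentially the same route as the paper, which proves this proposition implicitly by combining Corollary~\ref{cor1} (Hadamard differentiability of $F\mapsto S_F$ via the quotient rule $\eta'_{T_F}(u)=u(\cdot)/T_F(1)-u(1)T_F(\cdot)/T_F(1)^2$), the bootstrap functional delta method and quantile-convergence argument from the proof of Proposition~\ref{ptest}, and the domination-under-the-null argument of Lemma~\ref{lemmatest}. The only trivial slip is citing properties~4 and~6 of Proposition~\ref{properties} where the scaling property~5 (together with~4) is what lets you pull $\sqrt n$ inside $\Phi_r$ before applying the continuous mapping theorem; this does not affect the argument.
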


\section{Tests for the excess wealth order}\label{sect ew}

Similarly to the case of the TTT order, $\mathcal{H}^{ew}_0$ is false if and only if the difference $\delta(p)=W_G(p)-W_F(p)$ is positive at some point $p\in[0,1]$. Letting ${W}_{F_n}(p)=\overline{X}_n-T_{F_{n}}$ and ${W}_{G_m}(p)=\overline{Y}_m-T_{G_{n}}$, we reject the null hypothesis if $\Psi_r(W_{G_m}-W_{F_n})$ is large enough. As $n$ and $m$ tend to infinity, $\widehat{\delta}:=W_{G_m}-W_{F_n}$ converges strongly and uniformly to $\delta$ in $[0,1]$.
The empirical excess wealth processes associated with $F$ and $G$ may be defined as $$w^F_n=\sqrt{n}(W_{F_n}-W_F), \qquad w^G_m=\sqrt{m}(W_{G_m}-W_G),$$
respectively. These processes yield $$\Delta_{n,m}:=\sqrt{r_{n,m}}(\widehat{\delta}-\delta)=\sqrt{\tfrac n{n+m}}w^G_m-\sqrt{\tfrac m{n+m}}w^F_n.$$
Weak convergence of $\Delta_{n,m}$ may be established in the following corollary, using the same arguments of Theorem~\ref{process} (the proof is omitted).
\begin{corollary}\label{processw}
 Under Assumption~\ref{ass1}, for the sampling schemes discussed in Section~\ref{sampling} we have$$ \sqrt{r_{n,m}}(\widehat{\delta}-\delta)\rightsquigarrow\widetilde{\mathcal{W}}=\sqrt{\lambda}\mathcal{W}_G-\sqrt{1-\lambda}\mathcal{W}_F\quad \text{ in }C[0,1],$$
	where
	$$\mathcal{W}_F(p)= -\frac{(1-p)\mathcal{B}_1(p)}{f\circ F^{-1}(p)}+\int_p^1 \frac{\mathcal{B}_1(y)}{f\circ F^{-1}(y)}dy,\qquad\mathcal{W}_G(p)= -\frac{(1-p)\mathcal{B}_2(p)}{g\circ G^{-1}(p)}+\int_p^1 \frac{\mathcal{B}_2(y)}{g\circ G^{-1}(y)}dy.$$
	
\end{corollary}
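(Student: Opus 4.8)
The plan is to derive the weak convergence of $\sqrt{r_{n,m}}(\widehat{\delta}-\delta)$ for the excess wealth transform by reducing it to the already-established Theorem~\ref{process}. The key observation is the identity $W_H(p)=\mu_H-T_H(p)$, which at the empirical level reads $W_{F_n}(p)=\overline{X}_n-T_{F_n}(p)$, with $\overline{X}_n=T_{F_n}(1)$. First I would write
\begin{equation*}
w^F_n(p)=\sqrt{n}\bigl(W_{F_n}(p)-W_F(p)\bigr)=\sqrt{n}\bigl((\overline{X}_n-\mu_F)-(T_{F_n}(p)-T_F(p))\bigr)=t^F_n(1)-t^F_n(p),
\end{equation*}
since $t^F_n(1)=\sqrt{n}(\overline{X}_n-\mu_F)$. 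Thus $w^F_n$ is a continuous linear functional of the TTT process $t^F_n$, namely the map $\psi\mapsto\psi(1)-\psi(\cdot)$ on $C[0,1]$, and likewise for $w^G_m$. Because this map is linear and bounded (hence continuous) from $C[0,1]$ to $C[0,1]$, the continuous mapping theorem applied to the joint weak convergence from Theorem~\ref{process} immediately yields joint convergence of $(w^F_n,w^G_m)$, and therefore of the linear combination $\Delta_{n,m}=\sqrt{n/(n+m)}\,w^G_m-\sqrt{m/(n+m)}\,w^F_n$, to $\sqrt{\lambda}\,\mathcal{W}_G-\sqrt{1-\lambda}\,\mathcal{W}_F$, where the limits are obtained by applying the same map to $\mathcal{T}_F$ and $\mathcal{T}_G$.

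The remaining task is to check that the explicit limit $\mathcal{W}_F(p)=\mathcal{T}_F(1)-\mathcal{T}_F(p)$ simplifies to the stated closed form. Substituting the expression for $\mathcal{T}_F$ from Theorem~\ref{process} gives
\begin{equation*}
\mathcal{W}_F(p)=\frac{(1-1)\mathcal{B}_1(1)}{f\circ F^{-1}(1)}+\int_0^1\mathcal{B}_1(y)\,dF^{-1}(y)-\frac{(1-p)\mathcal{B}_1(p)}{f\circ F^{-1}(p)}-\int_0^p\mathcal{B}_1(y)\,dF^{-1}(y).
\end{equation*}
The first term vanishes because of the factor $(1-p)$ evaluated at $p=1$, and the two integrals combine into $\int_p^1\mathcal{B}_1(y)\,dF^{-1}(y)$. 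Rewriting $dF^{-1}(y)=dy/(f\circ F^{-1}(y))$, which is justified under Assumption~\ref{ass1} since $f$ is strictly positive and continuously differentiable, turns this into $\int_p^1 \mathcal{B}_1(y)/(f\circ F^{-1}(y))\,dy$, reproducing exactly the claimed formula; the same computation applies verbatim to $\mathcal{W}_G$.

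The subtle point requiring care is the behaviour of the boundary term and the integrals near $p=1$, where $1/(f\circ F^{-1}(p))$ may be unbounded. I expect the main obstacle to be verifying that the limit process indeed lives in $C[0,1]$, i.e. that the trajectories of $\mathcal{W}_F$ are well-defined and continuous up to the endpoint; this is where the finite $(2+\epsilon)$-moment hypothesis enters, controlling the integrability of $\mathcal{B}_1(y)/(f\circ F^{-1}(y))$ near $y=1$. Since Theorem~\ref{process} already guarantees $\mathcal{T}_F\in C[0,1]$ and $\mathcal{W}_F$ is a continuous image of $\mathcal{T}_F$ under the linear map above, this regularity is in fact inherited for free, so the bulk of the proof is simply recording the continuous-mapping argument and performing the algebraic simplification.
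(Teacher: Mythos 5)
Your proof is correct, but it follows a genuinely different route from the paper's. The paper mirrors the proof of Theorem~\ref{process}: it writes $W_F(p)=\int_p^1(1-y)\,dF^{-1}(y)$, establishes Hadamard differentiability of the linear map $\psi:F^{-1}\mapsto W_F$ from $\mathbb{B}$ to $C[0,1]$, and then reruns the chain rule, the functional delta method, and an integration by parts. You instead exploit the exact identities $W_{F_n}=T_{F_n}(1)-T_{F_n}(\cdot)$ and $W_F=T_F(1)-T_F(\cdot)$, so that $w^F_n=L(t^F_n)$ and $w^G_m=L(t^G_m)$ for the bounded linear map $L(\psi)=\psi(1)-\psi(\cdot)$ on $C[0,1]$, and reduce the corollary to Theorem~\ref{process} via the continuous mapping theorem plus the algebraic simplification $\mathcal{W}_F=\mathcal{T}_F(1)-\mathcal{T}_F(\cdot)$, using $dF^{-1}(y)=dy/(f\circ F^{-1}(y))$; the computation checks out and reproduces the stated formulas. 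Your route is more elementary (no new differentiability argument, and $C[0,1]$-regularity of the limit is inherited for free), whereas the paper's route produces the Hadamard derivative of $H\mapsto W_H$ as a byproduct, which is what the bootstrap functional delta method in Section~\ref{sect ew} implicitly reuses; with your approach one would instead obtain the bootstrap analogue by applying the same map $L$ to the bootstrap TTT processes, which is equally easy but should be said. Two minor points: (i) the joint convergence of $(t^F_n,t^G_m)$ you invoke appears in the paper's \emph{proof} of Theorem~\ref{process} (the displayed convergence to $(\mathcal{T}_F,\mathcal{T}_G)$), not in its statement; alternatively, you could avoid it altogether by observing that $\sqrt{r_{n,m}}(\widehat{\delta}_{ew}-\delta_{ew})=L\bigl(\sqrt{r_{n,m}}(\widehat{\delta}_{ttt}-\delta_{ttt})\bigr)$ exactly, so the statement of Theorem~\ref{process} alone suffices; (ii) your evaluation $\mathcal{T}_F(1)=\int_0^1\mathcal{B}_1(y)\,dF^{-1}(y)$ requires the boundary term $(1-p)\mathcal{B}_1(p)/(f\circ F^{-1}(p))$ to vanish as $p\to1$, which is where the $(2+\epsilon)$-moment condition enters; you flag this correctly, and your treatment is no less rigorous than the paper's, which does not address it at all.
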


Using the same arguments of Section~\ref{consistency}, we may establish an equivalent result to Lemma~\ref{lemmatest}. Denoting the bootstrap estimators of $W_{F_n}$ and $W_{G_m}$ as $W_{F^*_n}$ and $W_{G^*_m}$, respectively, and letting $\widehat{\delta}^*=W_{G^*_n}-W_{F^*_n}$, the functional delta method for the bootstrap implies that $\sqrt{r_{n,m}}\Psi_r( \widehat{\delta}^*-\widehat{\delta})$ has the same limiting distribution of~$\sqrt{r_{n,m}}\Psi_r(\widetilde{\mathcal{W}}) $. This allows the approximation of $p$-values using bootstrap replications as in Section~\ref{boot}. Accordingly, by the same arguments used in the proof of Proposition~\ref{ptest}, we show consistency of the corresponding bootstrap-based tests.

\begin{proposition}
\label{ewtest}
Under Assumptions~\ref{ass1} and the sampling schemes in Section~3.1,
\begin{enumerate}
	\item If $\mathcal{H}^{ew}_0$ is true, $\lim_{n\rightarrow\infty}P\{\text{reject }\mathcal{H}^{ew}_0\}\leq\alpha$; 
	\item If $\mathcal{H}^{ew}_0$ is false, $\lim_{n\rightarrow\infty}P\{\text{reject }\mathcal{H}^{ew}_0\}=1$.\end{enumerate}
	\end{proposition}

\section{Simulations}\label{sect sim}
In this section, {we investigate the finite sample properties of our tests through some numerical analyses. In particular,} we focus on tests for $\mathcal{H}_0^{ttt}$ and $\mathcal{H}_0^{NBUE}$. {We do not focus on tests for the excess wealth order, because their behaviour is specular to that of our tests for the TTT order. We consider the Weibull and the Singh-Maddala distributions. Namely, we} denote with $W(a,b)$ the  Weibull distribution with shape parameter $a>0$, scale parameter $b>0$, and CDF $F_\text{W}(x;a,b)=1-\exp(-(x/b)^a)$. Moreover, we denote with $\text{SM}(a,b,c)$ the Singh-Maddala distribution with shape parameters $a>0,b>0$, scale parameter $c>0$, and CDF $F_{\text{SM}}(x;a,b,c)=1-\left((x/c)^b+1\right)^{-a}$. The TTT transform of $W(a,b)$ is 
$$T_{F_\text{W}}(p)=\frac{b}{a}\left(\Gamma \left(\frac{1}{a},0\right)- \Gamma \left(\frac{1}{a},-\log(1-p)\right)\right),$$
where $\Gamma$ is the gamma function, while that of $\text{SM}(a,b,c)$ exists just for $ab>1$ and has the following expression
$$ T_{F_\text{SM}}(p)=c \left((1-p)^{-1/a}-1\right)^{1/b} \, _2F_1\left(a,\frac{1}{b};1+\frac{1}{b};1-(1-x)^{-1/a}\right),$$
where $\, _2F_1$ is the hypergeometric function.

We simulate different scenarios from the distributions above, under the independent sampling framework, for $n=50,100,200,500,$ to investigate finite sample properties of the tests based on $\Psi_\infty$ and $\Psi_1$. For each scenario, we run 500 simulation experiments and sample 500 bootstrap replicates. The significance level of the test is set to $\alpha=0.1$. To simplify computations, we use the step function $\widetilde{T}_{F_n}(p)=\int_0^{F_n^{-1}(p)}(1-F_n(x))dx$ instead of $T_{F_n}$, since the difference among them is asymptotically negligible (recall that $T_{F_n}$ interpolates the jumps points of $\widetilde{T}_{F_n}$).
\subsection{Tests for $\mathcal{H}_0^{ttt}$}
We first let $X\sim \text{W}(a,1)$ and $Y\sim \text{W}(2,1)$. In this case, we have $X\geq_{ttt} Y$ when $a\geq2$, and the dominance becomes more evident for larger values of $a$. Both tests in these cases provide rejection rates that are always smaller than the nominal level $\alpha$ and get closer to 0 when the sample size grows. For this reason, we do not report these results here. Differently, letting $X\sim \text{W}(2,1)$ and $Y\sim \text{W}(a,1)$ we clearly have $X\not\geq_{ttt} Y$ for $a>1$. In this case, it is not difficult to detect deviations from the null hypothesis since $T_G$ is always above $T_F$. This is confirmed by the rejection rates in Figure~\ref{f1}. In this case, $\Psi_1$ tends to deliver larger power compared to $\Psi_\infty$. However, it may be harder to reject $\mathcal{H}_0^{ttt}$ when $T_G$ and $T_F$ cross. We consider the case in which $X\sim \text{W}(a,1)$ and $Y\sim \text{W}(1,1.2)$, for $a\geq1$, and also the reverse case, namely, $X\sim \text{W}(1,1.2)$ and $Y\sim \text{W}(a,1)$. The latter case is the most critical to detect, in particular for smaller values of $a$. Our results show that $\Psi_\infty$ often provides larger power compared to $\Psi_1$ in these scenarios. For $X\sim \text{W}(1,1.2)$ and $Y\sim \text{W}(1.25,1) $ both tests (especially $\Psi_1$) struggle to reject the null hypothesis, at least for the sample sizes considered (we obtain larger power increasing $n$ to 2000). However, even in this critical case, the $p$-values tend to decrease as the sample size grows, coherently with the consistency property in Proposition \ref{ptest}. Overall, our results suggest that $\Psi_\infty$ may be more reliable than $\Psi_1$, especially when the TTT transforms cross.

\begin{figure}
	\centering
	\includegraphics[scale=0.7]{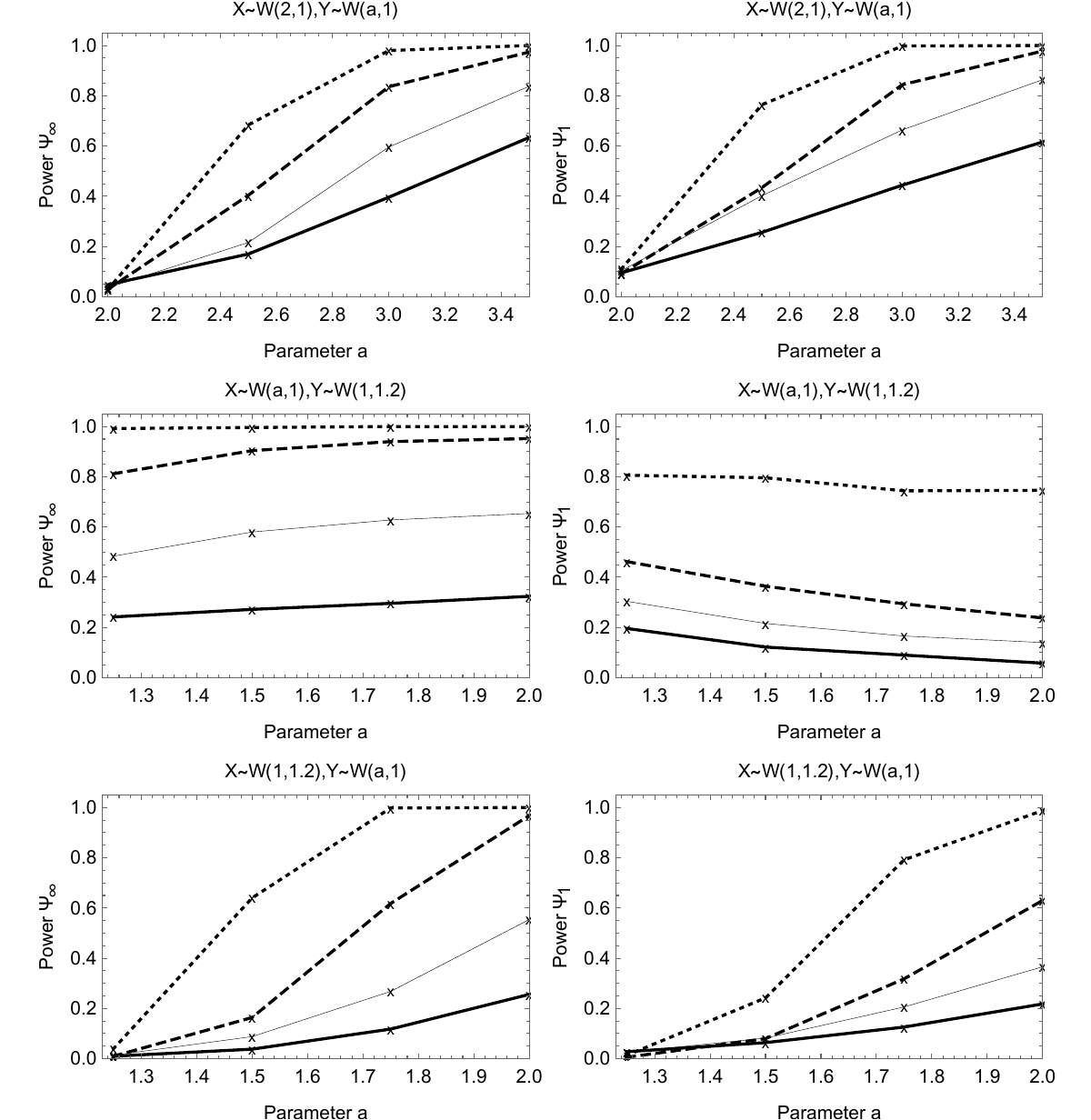}
	\caption{Tests for $\mathcal{H}_0^{ttt}$. Rejection rates for sample sizes $n=50$ (solid), $n=100$ (solid, thin), $n=200$ (dashed), $n=500$ (dotted).\label{f1}}
\end{figure}
\subsection{Tests for $\mathcal{H}_0^{NBUE}$}
First, recall that ageing properties are scale-independent, so there is no loss of generality in setting scale parameters equal to 1. The Weibull distribution $\text{W}(a,b)$ is NBUE for $a\geq1$, while, for $a<1$, it belongs to the \textit{new worse than used in expectation} (NWUE) class, which is the family of distributions characterised by the reverse stochastic inequality, $E\geq_{ttt}X/\mu_F$. Letting $X\sim \text{W}(a,1),$ where $a$ varies within $[0.8,1.1]$, we may observe the performance of the tests under the null and the alternative hypotheses. The results, reported in Figure~\ref{f2}, show that $\Psi_1$ provides larger power under the alternative compared to $\Psi_\infty$. The latter seems to be more conservative, as it appeares also by observing the behaviour of the tests under the null hypothesis. For both tests, the rejection rates tend to 1 under the alternative and tend to 0 under the null, confirming the asymptotic consistency properties. Moreover, the empirical power is always above the nominal level 0.1 when $a<1$, showing an unbiased behaviour against NBUE alternatives.


We also generate samples from the $\text{SM}(a,b,c)$ distribution, which is neither NBUE nor NWUE for $a\in[1,2]$ and $b=1.5$. Actually, in this case, the scaled TTT transform $S_{F_{\text{SM}}}$ crosses the identity function, which is the TTT transform of the unit exponential, from above. In particular, it is more difficult to detect non-dominance for larger values of $a$. The simulation results show that, in this more critical case, $\Psi_\infty$ yields larger rejection rates compared to $\Psi_1$, which seems to be less sensitive in detecting violations of the NBUE property especially for $a=2$. 

\begin{figure}
	\centering
	\includegraphics[scale=0.7]{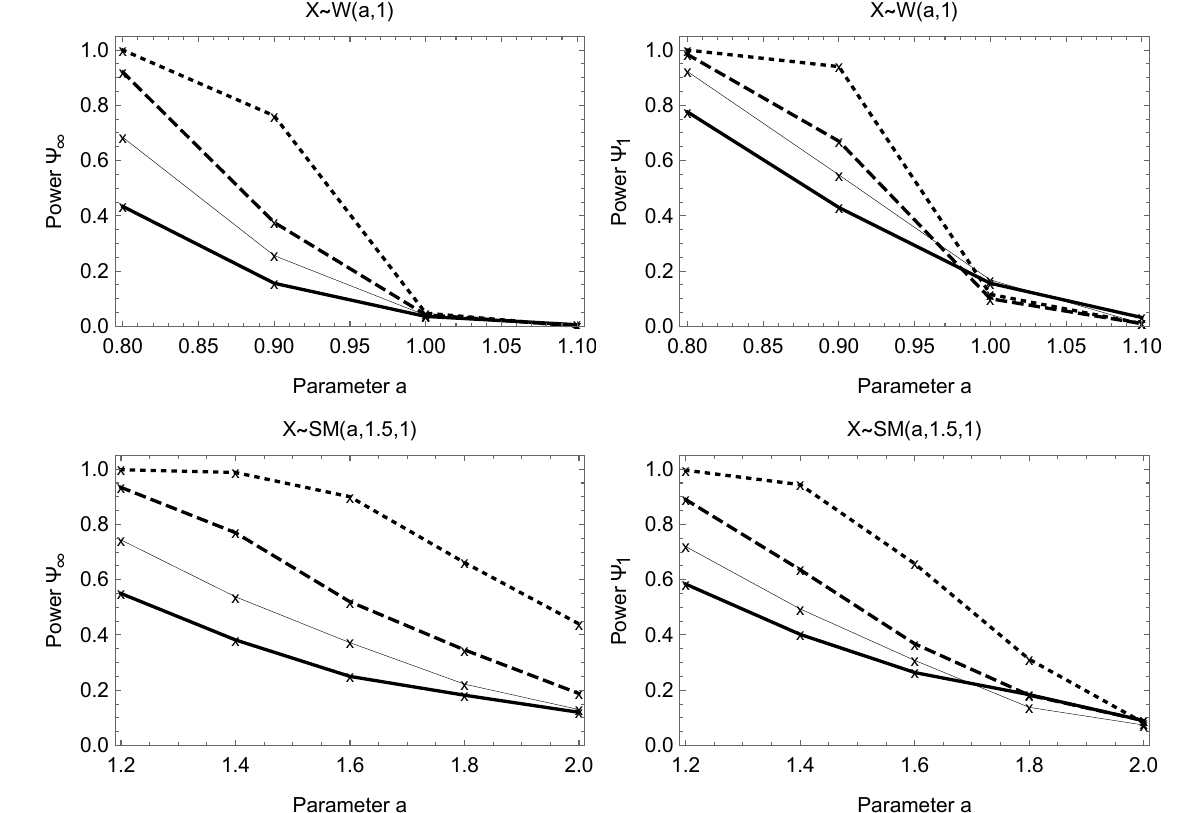}
	\caption{Tests for $\mathcal{H}_0^{NBUE}$. Rejection rates for sample sizes $n=50$ (solid), $n=100$ (solid, thin), $n=200$ (dashed), $n=500$ (dotted).\label{f2}}
\end{figure}
 \section{An example of application to survival data}\label{sect ex}

{As an illustrative example, we apply our tests on the dataset in \cite{hoel}, which reports the survival times of some mice which had received a radiation dose at an age of 5-6 weeks. The mice affected by the two main diseases, namely, thymic lymphoma (51 mice) and reticulum cell sarcoma (53 mice), were separated from the others (77 mice).}
 
{According to biologists, these two diseases are lethal and they are independent of one another and of other causes of death. One may be interested in testing whether the lifetime of the mice affected by these major diseases is stochastically smaller or larger (in the usual sense) than that of the other mice. In particular, tests for the usual stochastic order \citep{bd} suggest that thymic lymphoma shortens the lifespan significantly, compared to reticulum cell sarcoma and to the other causes of death. On the other hand, these tests also reveal that the lifetime of the mice affected by reticulum cell sarcoma, denoted with $X$, is not stochastically smaller than that of the mice which died by other minor causes, denoted with $Y$ (in this case, the  hypothesis of dominance is rejected with a $p$-value close to 0). In particular, for small values of $p$, the $p$-quantiles of $X$ are apparently larger than those of $Y$ (see the left panel of Figure~\ref{f4}), meaning that some other causes lead to death more rapidly than reticulum cell sarcoma. Moreover, sample means do not differ too much, but the sample mean of $X$ (i.e., 635) is slightly larger compared to that of $Y$ (i.e., 565), owing to one outlier in $X$ (i.e., 986). However, the non-affected mice seem to live longer at larger quantile values.}
	
	{It is then useful to understand whether weaker orders hold. It appears from Figure~\ref{f4} that we might have $X\leq_{ew}Y$ and $X\geq_{ttt}Y$. By applying our test for the excess wealth order, we find a $p$-value around 0.5 for both $\Psi_1$ and $\Psi_\infty$. The tests of \cite{belzunce2001}, at least for some values of their parameter $\alpha,$ confirm that the excess wealth order may hold; moreover, in this case, the hypothesis $X=_dY$ is easily rejected by the Kolmogorov-Smirnov test. Hence, the data suggest that $X\leq_{ew}Y$ (strictly). In other words, for every $p\in[0,1]$, the mice that survive beyond the $p$-quantile seem to have a longer expected residual lifetime when they are not affected by this disease. Similarly, by applying our test for the TTT order, we obtain even larger $p$-values, indicating also that $X\geq_{ttt}Y,$ as confirmed by any of the tests in \cite{belzunce2005}. So, vice-versa, the expected lifetime of the mice that do not survive more than the $p$-quantile is smaller for the other (and possibly, more rapid) causes of death.}

{This example illustrates how the tests proposed in the present paper can assist in the analysis of survival data, in particular when stronger stochastic orders are not confirmed by tests, but the data suggest that the weaker orders considered in this paper may hold. Our tests can be used as an alternative, or complement, to the those proposed by \cite{belzunce2001,belzunce2005}, which in fact are not tailored to work under $\mathcal{H}_1^{ttt}$ or $\mathcal{H}_1^{ew}$. This is especially useful when the hypothesis of equal distributions is rejected by a goodness-of-fit test, as in the example considered in this section.}

\begin{figure}
	\centering
	\includegraphics[width=14cm,height=9cm]{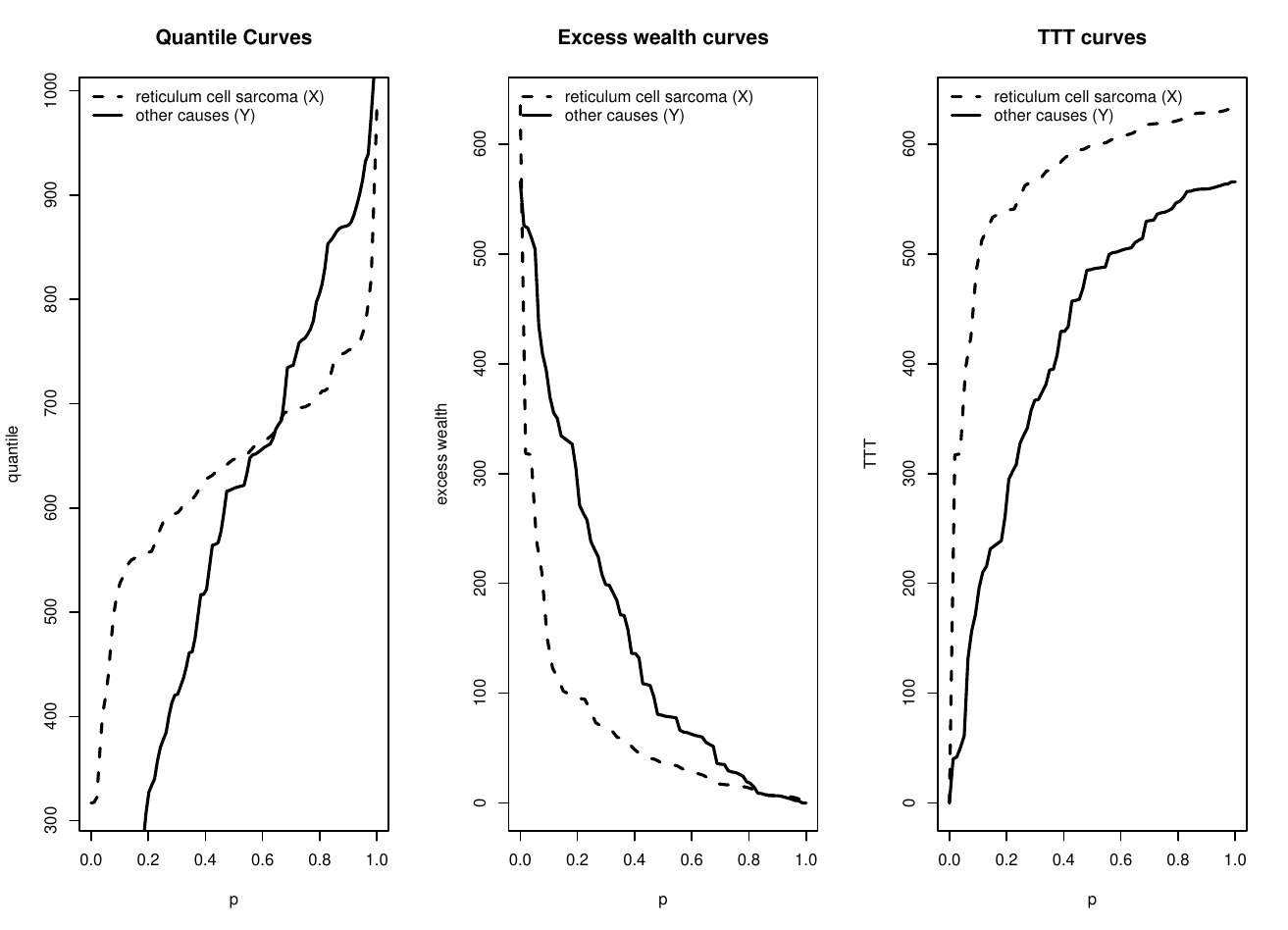}
	\caption{Empirical quantile curves and excess wealth transforms of the data in Section 7. The lifetime of the mice affected by reticulum cell sarcoma ($X$) appears to be smaller in the excess wealth order, and larger in the TTT order.\label{f4}}
\end{figure}

	\section*{Appendix: Proofs}
\label{app_proofs}

\begin{proof}[Proof of Theorem~\ref{process}.]
	Let $\mathbb{L}$ be the space of maps $z:[0,\infty)\rightarrow\mathbb{R}$ with $\lim_{x\rightarrow -\infty}z(x)=0$ and $\lim_{x\rightarrow\infty}z(x)=1$, and the norm $||z||_\mathbb{L}=\max\{||z||_\infty,||1-z||_1\}$. Moreover, let $\mathbb{B}$ be the Banach space of measurable functions $z$ from $(0, 1)$ to $\R$ equipped with the norm $||z||_{\mathbb{B}}=\int_0^1|z(p)|dp$. As shown by \cite{kaji}, under Assumption~\ref{ass1}, the map $\theta(F)=F^{-1}$, from CDFs to quantile functions, is Hadamard differentiable at $F$, tangentially to the set $\mathbb{L}_0$ of continuous functions in $\mathbb{L}$, with derivative map $\theta'_F:\mathbb{L}_0\to \mathbb{B}$ given by $\theta_F'(z)=-(z\circ F^{-1})(F^{-1})'. $
	Now, we need to establish the Hadamard differentiability of the map $\Phi:F^{-1}\rightarrow T_F$ as a map from $\mathbb{B}$ to $C[0,1]$. {By the change of variable $y=F(x)$, and then integrating by parts, we can write $$T_F(p)=\int_0^{F^{-1}(p)}(1-F(x))dx=\int_0^{p}(1-y)dF^{-1}(y)=F^{-1}(p)(1-p)+\int_0^pF^{-1}(y)dy.$$
	By linearity of the operator $\phi(F^{-1})$, the Hadamard derivative of $\phi$ at $F^{-1}$ in the direction $\alpha$ is still $\phi$, namely $\phi'_{F^{-1}}(\alpha)=\phi(\alpha)$. That is, letting $F^{-1}_t=F^{-1}+t\alpha_t$, where $||\alpha_t-\alpha||_1\to0$ and $t\rightarrow0$,
	$$\bigg\lvert\bigg\lvert\frac{\phi(F_t^{-1})-\phi(F^{-1})}t-\phi'_{F^{-1}}(\alpha) \bigg\lvert\bigg\lvert_1\leq\int_0^1\big\lvert(1-p)(\alpha_t(p)-\alpha(p))\big\lvert dp+\int_0^1\int_0^p\big\lvert\alpha_t(y)-\alpha(y)\big\lvert dy dp\to0.$$
	By the chain rule for Hadamard differentiability \citep[Lemma 3.9.3]{vw}, we have that the map $\phi\circ\theta:F\rightarrow T_F$ is Hadamard differentiable at $F$, tangentially to $\mathbb{L}_0$, with derivative
	$$(\phi\circ\theta)'_F(z(p))=\phi'_{\theta(F)}\circ \theta'_F(z(p))=(1-p)\theta_F(z(p))+\int_0^p\theta_F(z(y))dy
	=-(1-p)(z\circ F^{-1}(p))(F^{-1}(p))'-\int_0^p (z\circ F^{-1}(y))dF^{-1}(y).$$}
	Now, observe that $$\begin{pmatrix}\sqrt{n}(F_n-F) \\ \sqrt{m}(G_m-G)\end{pmatrix}\rightsquigarrow \begin{pmatrix}\mathcal{B}_1\circ F \\ \mathcal{B}_2\circ G\end{pmatrix}\text{  in }\mathbb{L}\times\mathbb{L},$$
	as shown in Lemma 5.1 of \cite{sunbeare}. Then, the functional delta method \citep[Theorem 3.9.4]{vw} implies the joint weak convergence 
	\begin{multline}
		\label{lor process}
		\begin{pmatrix}\sqrt{n}(T_{F_n}(p)-T_F(p)) \\ \sqrt{m}T_{G_m}(p)-T_G(p))\end{pmatrix}=\begin{pmatrix}\sqrt{n}(\phi\circ\theta({F_n})(p)-\phi\circ\theta({F})(p)) \\ \sqrt{m}(\phi\circ\theta({G_m})(p)-\phi\circ\theta({G})(p))\end{pmatrix} \rightsquigarrow\begin{pmatrix} (\phi\circ\theta)'_F(\mathcal{B}_1\circ F)(p) \\ (\phi\circ\theta)'_G(\mathcal{B}_2\circ G)(p)\end{pmatrix}\\=
		\begin{pmatrix}-(1-p)\mathcal{B}_1(p)(F^{-1})'(p)-\int_0^p \mathcal{B}_1(y)dF^{-1}(y) \\-(1-p)\mathcal{B}_2(p)(G^{-1})'(p)-\int_0^p \mathcal{B}_2(y)dG^{-1}(y)\end{pmatrix} =\begin{pmatrix}-\mathcal{T}_F(p)\\ -\mathcal{T}_G(p)\end{pmatrix}=_d \begin{pmatrix}\mathcal{T}_F(p)\\ \mathcal{T}_G(p)\end{pmatrix}\text{  in }C[0,1]\times C[0,1].\end{multline}
	The last relation ($=_d$) is because $\mathcal{T}_F$ and $\mathcal{T}_G$ are centered Gaussian processes.
	Then the results follow from the continuous mapping theorem, as in the proof of Lemma 2.1 of \cite{sunbeare}.
	\end{proof}
	
	\begin{proof}[Proof of Proposition~\ref{ptest}].
		As proved in Lemma 5.2 of \cite{sunbeare},
		$$\begin{pmatrix} \sqrt{n}({F^*_n}-{F_n} )\\ \sqrt{m}({G^*_m}-{G_m}) \end{pmatrix}\vc{\rightsquigarrow}{as*}{M} \begin{pmatrix} \mathcal{B}_1\circ F \\ \mathcal{B}_2\circ G \end{pmatrix} \text{  in }\mathbb{L}\times \mathbb{L},$$
		where $\vc{\rightsquigarrow}{as*}{M}$ denotes weak convergence conditional on the data almost surely, see \citet[p.20]{kosorok}.
		The proof of Theorem~\ref{process} establishes the Hadamard differentiability of the map $\phi \circ \theta:H\rightarrow T_H$, so that the functional delta method for the bootstrap implies
		$$\sqrt{r_{n,m}}\begin{pmatrix} T_{F^*_n}-T_{F_n} \\ T_{G^*_n}-T_{G_m} \end{pmatrix}\vc{\rightsquigarrow}{P}{M} \begin{pmatrix} \lambda \mathcal{T}_F \\ (1-\lambda) \mathcal{T}_G \end{pmatrix} \text{ in }C[0,1]\times C[0,1],$$
		where $\vc{\rightsquigarrow}{P}{M}$ denotes weak convergence conditional on the data in probability \citep[see][p.20]{kosorok}.
		This entails that $\Psi_r(\sqrt{r_{n,m}}( \widehat{\delta}_k^*-\widehat{\delta}))\vc{\rightsquigarrow}{P}{M} \Psi_r(\widetilde{\mathcal{T}})$ by the continuous mapping theorem. The test rejects the null hypothesis if the test statistic exceeds the bootstrap threshold value $c^*_n(\alpha)=\inf\{y: P(\sqrt{r_{n,m}} \ \Psi_r( \widehat{\delta}^*-\widehat{\delta})>y|{\mathcal{X},\mathcal{Y}})\leq \alpha\}$. However, the weak convergence result implies $c_n^*(\alpha)\rightarrow c(\alpha)=\inf\{y: P(\Psi_r(\widetilde{\mathcal{T}})>y)\leq \alpha\}$ in probability, so Lemma~\ref{lemmatest} yields the result.
	\end{proof}
	
	\begin{proof}[Proof of Corollary~\ref{cor1}]
		Using the same notations for $\theta$ and $\Psi$ as in the proof of Theorem~\ref{process}, we now focus on the map $\eta\circ\Psi\circ\theta:F\to S_F$, where $\eta:C[0,1]\to C[0,1]$ is defined as $\eta(T_F)=T_F(\cdot)/T_F(1)$. The Hadamard derivative of $\eta$ at $T_F$ in the direction $u\in C[0,1]$ is given by
		$$ \eta'_{T_F}(u)=\frac{u(\cdot)}{T_F(1)}-\frac{u(1)T_F(\cdot)}{T_F(1)^2},$$
		see the proof of Lemma 2.1 in \cite{sunbeare}. Therefore, the result follows from the chain rule and the functional delta method.
	\end{proof}  
	
\section*{Acknowledgements}
We are grateful to the anonymous referees for their useful suggestions and constructive comments.

\bibliographystyle{elsarticle-harv}
\bibliography{biblio}

\begin{thebibliography}{37}
\expandafter\ifx\csname natexlab\endcsname\relax\def\natexlab#1{#1}\fi
\providecommand{\url}[1]{\texttt{#1}}
\providecommand{\href}[2]{#2}
\providecommand{\path}[1]{#1}
\providecommand{\DOIprefix}{doi:}
\providecommand{\ArXivprefix}{arXiv:}
\providecommand{\URLprefix}{URL: }
\providecommand{\Pubmedprefix}{pmid:}
\providecommand{\doi}[1]{\href{http://dx.doi.org/#1}{\path{#1}}}
\providecommand{\Pubmed}[1]{\href{pmid:#1}{\path{#1}}}
\providecommand{\bibinfo}[2]{#2}
\ifx\xfnm\relax \def\xfnm[#1]{\unskip,\space#1}\fi
\bibitem[{Anis and Basu(2014)}]{anis2014}
\bibinfo{author}{Anis, M.}, \bibinfo{author}{Basu, K.}, \bibinfo{year}{2014}.
\newblock \bibinfo{title}{Tests for exponentiality against {NBUE} alternatives:
  a {M}onte {C}arlo comparison}.
\newblock \bibinfo{journal}{Journal of Statistical Computation and Simulation}
  \bibinfo{volume}{84}, \bibinfo{pages}{231--247}.
\bibitem[{Anis and Mitra(2011)}]{anis2011}
\bibinfo{author}{Anis, M.}, \bibinfo{author}{Mitra, M.}, \bibinfo{year}{2011}.
\newblock \bibinfo{title}{A generalized {H}ollander--{P}roschan type test for
  {NBUE} alternatives}.
\newblock \bibinfo{journal}{Statistics \& {P}robability {L}etters}
  \bibinfo{volume}{81}, \bibinfo{pages}{126--132}.
\bibitem[{Barlow and Proschan(1975)}]{barlow1975}
\bibinfo{author}{Barlow, R.E.}, \bibinfo{author}{Proschan, F.},
  \bibinfo{year}{1975}.
\newblock \bibinfo{title}{Statistical theory of reliability and life testing:
  {P}robability models}.
\newblock \bibinfo{publisher}{Holt, Rinehart and Winston New York}.
\bibitem[{Barlow and Van~Zwet(1969)}]{barlowzwet}
\bibinfo{author}{Barlow, R.E.}, \bibinfo{author}{Van~Zwet, W.R.},
  \bibinfo{year}{1969}.
\newblock \bibinfo{title}{Asymptotic properties of isotonic estimators for the
  generalized failure rate function. {P}art 1: strong consistency}.
\newblock \bibinfo{type}{Technical Report}. California Univ Berkeley
  {O}perations {R}esearch center.
\bibitem[{Barmalzan and Najafabadi(2015)}]{barmalzan}
\bibinfo{author}{Barmalzan, G.}, \bibinfo{author}{Najafabadi, A.T.P.},
  \bibinfo{year}{2015}.
\newblock \bibinfo{title}{On the convex transform and right-spread orders of
  smallest claim amounts}.
\newblock \bibinfo{journal}{Insurance: Mathematics and Economics}
  \bibinfo{volume}{64}, \bibinfo{pages}{380--384}.
\bibitem[{Barrett and Donald(2003)}]{bd}
\bibinfo{author}{Barrett, G.F.}, \bibinfo{author}{Donald, S.G.},
  \bibinfo{year}{2003}.
\newblock \bibinfo{title}{Consistent tests for stochastic dominance}.
\newblock \bibinfo{journal}{Econometrica} \bibinfo{volume}{71},
  \bibinfo{pages}{71--104}.
\bibitem[{Barrett et~al.(2014)Barrett, Donald and Bhattacharya}]{bdlorenz}
\bibinfo{author}{Barrett, G.F.}, \bibinfo{author}{Donald, S.G.},
  \bibinfo{author}{Bhattacharya, D.}, \bibinfo{year}{2014}.
\newblock \bibinfo{title}{Consistent nonparametric tests for {Lorenz}
  dominance}.
\newblock \bibinfo{journal}{Journal of Business \& Economic Statistics}
  \bibinfo{volume}{32}, \bibinfo{pages}{1--13}.
\bibitem[{Belzunce et~al.(2005)Belzunce, Li, Pinar and Ruiz}]{belzunce2005}
\bibinfo{author}{Belzunce, F.}, \bibinfo{author}{Li, X.},
  \bibinfo{author}{Pinar, J.F.}, \bibinfo{author}{Ruiz, J.M.},
  \bibinfo{year}{2005}.
\newblock \bibinfo{title}{Tests for the total time on test transform order}.
\newblock \bibinfo{journal}{Journal of {S}tatistical {P}lanning and
  {I}nference} \bibinfo{volume}{133}, \bibinfo{pages}{111--121}.
\bibitem[{Belzunce and Mart{\'\i}nez-Riquelme(2015)}]{belzunce2015}
\bibinfo{author}{Belzunce, F.}, \bibinfo{author}{Mart{\'\i}nez-Riquelme, C.},
  \bibinfo{year}{2015}.
\newblock \bibinfo{title}{Some results for the comparison of generalized order
  statistics in the total time on test and excess wealth orders}.
\newblock \bibinfo{journal}{Statistical Papers} \bibinfo{volume}{56},
  \bibinfo{pages}{1175--1190}.
\bibitem[{Belzunce et~al.(2014)Belzunce, Mart{\'\i}nez-Riquelme and
  Ruiz}]{belzunce2014}
\bibinfo{author}{Belzunce, F.}, \bibinfo{author}{Mart{\'\i}nez-Riquelme, C.},
  \bibinfo{author}{Ruiz, J.M.}, \bibinfo{year}{2014}.
\newblock \bibinfo{title}{A characterization and sufficient conditions for the
  total time on test transform order}.
\newblock \bibinfo{journal}{Test} \bibinfo{volume}{23},
  \bibinfo{pages}{72--85}.
\bibitem[{Belzunce et~al.(2001)Belzunce, Pinar and Ruiz}]{belzunce2001}
\bibinfo{author}{Belzunce, F.}, \bibinfo{author}{Pinar, J.},
  \bibinfo{author}{Ruiz, J.}, \bibinfo{year}{2001}.
\newblock \bibinfo{title}{A family of tests for right spread order}.
\newblock \bibinfo{journal}{Statistics \& Probability Letters}
  \bibinfo{volume}{54}, \bibinfo{pages}{79--92}.
\bibitem[{Block and Savits(1978)}]{block}
\bibinfo{author}{Block, H.W.}, \bibinfo{author}{Savits, T.H.},
  \bibinfo{year}{1978}.
\newblock \bibinfo{title}{Shock models with {NBUE} survival}.
\newblock \bibinfo{journal}{Journal of Applied Probability}
  \bibinfo{volume}{15}, \bibinfo{pages}{621--628}.
\bibitem[{Brown(2013)}]{brown}
\bibinfo{author}{Brown, M.}, \bibinfo{year}{2013}.
\newblock \bibinfo{title}{Sharp bounds for {NBUE} distributions}.
\newblock \bibinfo{journal}{Annals of Operations Research}
  \bibinfo{volume}{208}, \bibinfo{pages}{245--250}.
\bibitem[{Cheng and Lam(2002)}]{cheng}
\bibinfo{author}{Cheng, K.}, \bibinfo{author}{Lam, Y.}, \bibinfo{year}{2002}.
\newblock \bibinfo{title}{Reliability bounds for the {NBUE} distributions}.
\newblock \bibinfo{journal}{Naval Research Logistics} \bibinfo{volume}{49},
  \bibinfo{pages}{781--797}.
\bibitem[{Denuit and Vermandele(1999)}]{denuit}
\bibinfo{author}{Denuit, M.}, \bibinfo{author}{Vermandele, C.},
  \bibinfo{year}{1999}.
\newblock \bibinfo{title}{Lorenz and excess wealth orders, with applications in
  reinsurance theory}.
\newblock \bibinfo{journal}{Scandinavian Actuarial Journal}
  \bibinfo{volume}{1999}, \bibinfo{pages}{170--185}.
\bibitem[{Dhaene et~al.(2006)Dhaene, Vanduffel, Goovaerts, Kaas, Tang and
  Vyncke}]{dhaene}
\bibinfo{author}{Dhaene, J.}, \bibinfo{author}{Vanduffel, S.},
  \bibinfo{author}{Goovaerts, M.J.}, \bibinfo{author}{Kaas, R.},
  \bibinfo{author}{Tang, Q.}, \bibinfo{author}{Vyncke, D.},
  \bibinfo{year}{2006}.
\newblock \bibinfo{title}{Risk measures and comonotonicity: a review}.
\newblock \bibinfo{journal}{Stochastic models} \bibinfo{volume}{22},
  \bibinfo{pages}{573--606}.
\bibitem[{Dobler(2019)}]{dobler2019}
\bibinfo{author}{Dobler, D.}, \bibinfo{year}{2019}.
\newblock \bibinfo{title}{Bootstrapping the {K}aplan--{M}eier estimator on the
  whole line}.
\newblock \bibinfo{journal}{Annals of the Institute of Statistical Mathematics}
  \bibinfo{volume}{71}, \bibinfo{pages}{213--246}.
\bibitem[{Fernandez-Ponce et~al.(1998)Fernandez-Ponce, Kochar and
  Mu{\~n}oz-Perez}]{ponce}
\bibinfo{author}{Fernandez-Ponce, J.M.}, \bibinfo{author}{Kochar, S.C.},
  \bibinfo{author}{Mu{\~n}oz-Perez, J.}, \bibinfo{year}{1998}.
\newblock \bibinfo{title}{Partial orderings of distributions based on
  right-spread functions}.
\newblock \bibinfo{journal}{Journal of Applied Probability}
  \bibinfo{volume}{35}, \bibinfo{pages}{221--228}.
\bibitem[{Gill(1983)}]{gill1983}
\bibinfo{author}{Gill, R.}, \bibinfo{year}{1983}.
\newblock \bibinfo{title}{Large sample behaviour of the product-limit estimator
  on the whole line}.
\newblock \bibinfo{journal}{The Annals of Statistics} ,
  \bibinfo{pages}{49--58}.
\bibitem[{Hoel(1972)}]{hoel}
\bibinfo{author}{Hoel, D.G.}, \bibinfo{year}{1972}.
\newblock \bibinfo{title}{A representation of mortality data by competing
  risks}.
\newblock \bibinfo{journal}{Biometrics} , \bibinfo{pages}{475--488}.
\bibitem[{Hollander and Proschan(1975)}]{hollander}
\bibinfo{author}{Hollander, M.}, \bibinfo{author}{Proschan, F.},
  \bibinfo{year}{1975}.
\newblock \bibinfo{title}{Tests for the mean residual life}.
\newblock \bibinfo{journal}{Biometrika} \bibinfo{volume}{62},
  \bibinfo{pages}{585--593}.
\bibitem[{Jewitt(1989)}]{jewitt}
\bibinfo{author}{Jewitt, I.}, \bibinfo{year}{1989}.
\newblock \bibinfo{title}{Choosing between risky prospects: the
  characterization of comparative statics results, and location independent
  risk}.
\newblock \bibinfo{journal}{Management Science} \bibinfo{volume}{35},
  \bibinfo{pages}{60--70}.
\bibitem[{Kaji(2018)}]{kaji}
\bibinfo{author}{Kaji, T.}, \bibinfo{year}{2018}.
\newblock \bibinfo{title}{Essays on asymptotic methods in econometrics}.
\newblock Ph.D. thesis. Massachusetts Institute of Technology.
\bibitem[{Kaplan and Meier(1958)}]{km}
\bibinfo{author}{Kaplan, E.L.}, \bibinfo{author}{Meier, P.},
  \bibinfo{year}{1958}.
\newblock \bibinfo{title}{Nonparametric estimation from incomplete
  observations}.
\newblock \bibinfo{journal}{Journal of the American statistical association}
  \bibinfo{volume}{53}, \bibinfo{pages}{457--481}.
\bibitem[{Kochar et~al.(2007)Kochar, Li and Xu}]{kochar2007}
\bibinfo{author}{Kochar, S.}, \bibinfo{author}{Li, X.}, \bibinfo{author}{Xu,
  M.}, \bibinfo{year}{2007}.
\newblock \bibinfo{title}{Excess wealth order and sample spacings}.
\newblock \bibinfo{journal}{Statistical Methodology} \bibinfo{volume}{4},
  \bibinfo{pages}{385--392}.
\bibitem[{Kochar et~al.(2002)Kochar, Li and Shaked}]{kochar2002}
\bibinfo{author}{Kochar, S.C.}, \bibinfo{author}{Li, X.},
  \bibinfo{author}{Shaked, M.}, \bibinfo{year}{2002}.
\newblock \bibinfo{title}{The total time on test transform and the excess
  wealth stochastic orders of distributions}.
\newblock \bibinfo{journal}{Advances in Applied Probability}
  \bibinfo{volume}{34}, \bibinfo{pages}{826--845}.
\bibitem[{Kosorok(2008)}]{kosorok}
\bibinfo{author}{Kosorok, M.R.}, \bibinfo{year}{2008}.
\newblock \bibinfo{title}{Introduction to empirical processes and
  semiparametric inference.}
\newblock \bibinfo{publisher}{Springer}.
\bibitem[{Lando(2024)}]{landonbue}
\bibinfo{author}{Lando, T.}, \bibinfo{year}{2024}.
\newblock \bibinfo{title}{Nonparametric inference for {NBUE} distributions
  based on the {TTT} transform}.
\newblock \bibinfo{journal}{Statistics \& Probability Letters} ,
  \bibinfo{pages}{110157}.
\bibitem[{Lando and Legramanti(2024)}]{landonew}
\bibinfo{author}{Lando, T.}, \bibinfo{author}{Legramanti, S.},
  \bibinfo{year}{2024}.
\newblock \bibinfo{title}{A new class of nonparametric tests for second-order
  stochastic dominance based on the {L}orenz {P}--{P} plot}.
\newblock \bibinfo{journal}{Scandinavian Journal of Statistics} .
\bibitem[{Li and Shaked(2007)}]{li2007}
\bibinfo{author}{Li, X.}, \bibinfo{author}{Shaked, M.}, \bibinfo{year}{2007}.
\newblock \bibinfo{title}{A general family of univariate stochastic orders}.
\newblock \bibinfo{journal}{Journal of {S}tatistical {P}lanning and
  {I}nference} \bibinfo{volume}{137}, \bibinfo{pages}{3601--3610}.
\bibitem[{Marshall and Olkin(2007)}]{LifeDist}
\bibinfo{author}{Marshall, A.W.}, \bibinfo{author}{Olkin, I.},
  \bibinfo{year}{2007}.
\newblock \bibinfo{title}{Life Distributions}.
\newblock \bibinfo{publisher}{Springer, New York}.
\bibitem[{Shaked and Shanthikumar(1998)}]{shaked1998}
\bibinfo{author}{Shaked, M.}, \bibinfo{author}{Shanthikumar, J.G.},
  \bibinfo{year}{1998}.
\newblock \bibinfo{title}{Two variability orders}.
\newblock \bibinfo{journal}{Probability in the Engineering and Informational
  Sciences} \bibinfo{volume}{12}, \bibinfo{pages}{1--23}.
\bibitem[{Shaked and Shantikumar(2007)}]{shaked}
\bibinfo{author}{Shaked, M.}, \bibinfo{author}{Shantikumar, J.G.},
  \bibinfo{year}{2007}.
\newblock \bibinfo{title}{Stochastic Orders}.
\newblock \bibinfo{publisher}{Springer, New York}.
\bibitem[{Sordo(2009)}]{sordo2009}
\bibinfo{author}{Sordo, M.A.}, \bibinfo{year}{2009}.
\newblock \bibinfo{title}{Comparing tail variabilities of risks by means of the
  excess wealth order}.
\newblock \bibinfo{journal}{Insurance: Mathematics and Economics}
  \bibinfo{volume}{45}, \bibinfo{pages}{466--469}.
\bibitem[{Sun and Beare(2021)}]{sunbeare}
\bibinfo{author}{Sun, Z.}, \bibinfo{author}{Beare, B.K.}, \bibinfo{year}{2021}.
\newblock \bibinfo{title}{Improved nonparametric bootstrap tests of {Lorenz}
  dominance}.
\newblock \bibinfo{journal}{Journal of Business \& Economic Statistics}
  \bibinfo{volume}{39}, \bibinfo{pages}{189--199}.
\bibitem[{Tenga and Santner(1984)}]{tenga1984}
\bibinfo{author}{Tenga, R.}, \bibinfo{author}{Santner, T.J.},
  \bibinfo{year}{1984}.
\newblock \bibinfo{title}{Testing goodness of fit to the increasing failure
  rate family}.
\newblock \bibinfo{journal}{Naval research logistics quarterly}
  \bibinfo{volume}{31}, \bibinfo{pages}{617--630}.
\bibitem[{Van~der Vaart and Wellner(1996)}]{vw}
\bibinfo{author}{Van~der Vaart, A.}, \bibinfo{author}{Wellner, J.},
  \bibinfo{year}{1996}.
\newblock \bibinfo{title}{Weak convergence and empirical processes: with
  applications to statistics}.
\newblock \bibinfo{publisher}{Springer Science \& Business Media}.

\end{thebibliography}





\end{document}